\documentclass[a4paper,12pt]{amsart}
\usepackage[final]{epsfig}
\usepackage{graphics}
\usepackage{amsmath}
\usepackage{amsfonts}
\usepackage{latexsym}
\usepackage{amssymb}
\usepackage{fourier}
\usepackage{graphicx}
\usepackage{url}
\usepackage{epstopdf}
\usepackage{hyperref}
\usepackage{xcolor}
\usepackage{comment}
\usepackage{tikz}
\usepackage{marginnote}
\usetikzlibrary{arrows.meta, positioning}

\newtheorem{lemma}{Lemma}[section]
\newtheorem{proposition}[lemma]{Proposition}
\newtheorem{remark}[lemma]{Remark}
\newtheorem{example}[lemma]{Example}
\newtheorem{theorem}{Theorem}

\newtheorem{corollary}[lemma]{Corollary}

\newcommand{\g}{{\gamma}}

\newcommand{\eps}{{\varepsilon}}
\newcommand{\proofend}{$\Box$\bigskip}
\newcommand{\C}{{\mathbb C}}

\newcommand{\R}{{\mathbb R}}
\newcommand{\Z}{{\mathbb Z}}

\def\proof{\paragraph{Proof.}}

\renewcommand{\P}{\mathcal{P}}
\newcommand{\D}{\mathcal{D}}
\renewcommand{\C}{\mathcal{C}}

\begin{document}

\title  {Remarks on the outer length billiards}

\author{Misha Bialy}
\address{School of Mathematical Sciences, Raymond and Beverly Sackler Faculty of Exact Sciences, Tel Aviv University,
	Israel} 
\email{bialy@tauex.tau.ac.il}
\thanks{MB was partially supported by ISF grant 974/24.}
 \author{Serge Tabachnikov}
 \address{Department of Mathematics,
 	Penn State University,
 	University Park, PA 16802, USA}
 \email{tabachni@math.psu.edu}
 \thanks{ST was supported by NSF grants DMS-2404535 and Simons Foundation grant MPS-TSM-00007747.}

\date{\today}

\begin{abstract}
We study outer length billiards; our main results are as follows.
We prove  3- and 4-periodic versions of the Ivrii conjecture. We show that, for every period $n\ge 3$, there exists a functional space of billiard tables that possess invariant curves consisting of $n$-periodic points. For $n=4$, we explicitly parameterize such centrally symmetric billiard tables by functions of one variable and describe how to construct these tables geometrically, similarly to the known construction of Radon curves. 
\end{abstract}

\maketitle

\section{Introduction} \label{sect:int}

This paper continues the study of the outer length billiards,  started in \cite{ACT,AT,BBF}. This discrete-time dynamical system is associated with a plane oval (a closed strictly convex smooth curve) and acts on its exterior.  The precise definition of this billiard is given below,  in Section \ref{sect:form}; here we just say that its $n$-periodic orbit is an $n$-gon, circumscribed about the oval and having an extremal perimeter. 

This system is colloquially called ``the fourth billiard", the reason being that there are three other, better known, billiard models associated with plane ovals. These are the inner length, a.k.a. Birkhoff, billiards (the periodic orbits are the inscribed polygons of  extremal perimeter), the outer billiards (the periodic orbits are the circumscribed polygons of  extremal area), and the symplectic billiards (the periodic orbits are the inscribed polygons of extremal area); see \cite{AT,BBS} for unifying points of view. 

In the present paper we consider two billiard-related problems: the Ivrii conjecture, and the existence and description of invariant curves entirely consisting of periodic points. Both problems have been investigated for the above mentioned other three billiard models.

The outer length billiard transformation is an area preserving twist map,  the invariant area form depending on the billiard table (see Section \ref{sect:form}). The relevant version of the Ivrii conjecture states that the set of periodic points is a null set (a slightly weaker formulation is that this set has the empty interior). 

We show that the second iteration of the outer length billiard map is also a twist map and, using this result, in Theorem \ref{thm:34} we prove the Ivrii conjecture for periods $3$ and $4$. In Section \ref{sect:Ivr} we provide two other proofs of the 3-periodic case, using different approaches.  See \cite{AT1,BKNZ,GT,GlK,Ry,Sh,St,TZ,Tu,Vo,Wo} for similar results concerning the other three billiards. 

A billiard is completely integrable if its phase cylinder, or at least a part of it, is foliated by invariant curves. For all four billiard models, this is the case when the billiard table is an ellipse and, conjecturally, this is also the ``only if" statement, known as Birkhoff's conjecture, see \cite{BM-survey}. In particular, an integrable billiard has  invariant curves consisting of periodic points with all periods $n\ge 3$. The second problem is to construct billiard tables that possess invariant curves consisting of periodic points with a  given period.

In Section \ref{sect:dis}, we show that, for every $n\ge 3$, there is a functional space of outer length billiard tables having invariant curves comprising $n$-periodic point. We use the sub-Riemannian geometric approach that was put forward in \cite{BZ,La} in the case of Birkhoff billiards and applied to outer and symplectic  billiards in \cite{AT1,GT}. We also use this approach in one of the proofs of the 3-periodic Ivrii conjecture in Section \ref{sect:Ivr}.

 Section \ref{sect:4} concerns centrally symmetric outer length billiard tables with invariant curves consisting of 4-periodic points. We show that these periodic quadrilaterals are parallelograms and, in Theorem \ref{thm:4per}, we explicitly parameterize the respective billiard tables by functions of one variable. We also describe a construction of such ovals that resembles the construction of Radon curves (see, e.g., \cite{MS}). Radon curves solve an analogous problem for outer and symplectic billiards \cite{AT1,BBT}.

\medskip

{\bf Acknowledgements}. We thank V. Zharnitsky for helpful discussions.
ST is grateful to the Institute of Advanced Studies of the Tel Aviv University for its support and hospitality. 

\section{Invariant area form, twist condition} \label{sect:form}
The outer length billiard map, denoted by $ T$,  acts in the exterior of the plane oval $\g$
 according to the following rule (see Figure \ref{fig:fourth-rule}).
\begin{figure}[h]
	\centering
	\includegraphics[width=0.75\linewidth]{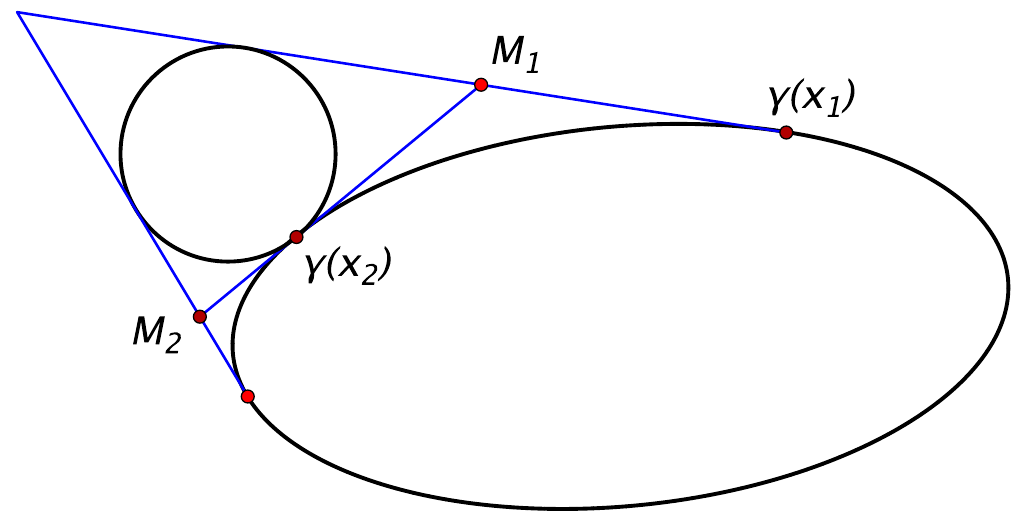}
	\caption{Outer length billiard rule.}
	\label{fig:fourth-rule}
\end{figure}

Given a point $M_1$ in the exterior of $\g(x)$, consider the two tangent lines  from $M_1$ to $\gamma$ and the unique circle tangent to $\gamma$ at point $\gamma(x_2)$ and to the tangent line at $\gamma(x_1)$. Then the point 
$M_2=T(M_1)$ is defined as the intersection point of the tangent line at $\gamma(x_2)$ and the unique line which is tangent to the circle and $\gamma$. We will denote the lengths of the tangent segments as follows:
$$
l_1:=|M_1-\gamma(x_1)|,\quad l_2:=|M_1-\gamma(x_2)|.
$$

One can prove \cite{AT} that $ T$ is a {positive} twist map, and that $S=l_1+l_2-\widearc{\g(x_1)\g(x_2)}$ is a generating function for the outer length billiard.

We will use the envelope coordinates of $\g$  to compute 
the generating function $S=l_1+l_2-\widearc{\g(x_1)\g(x_2)}$.
\begin{figure}[h]\label{lambda}
	\centering
	\includegraphics[width=0.73\linewidth]{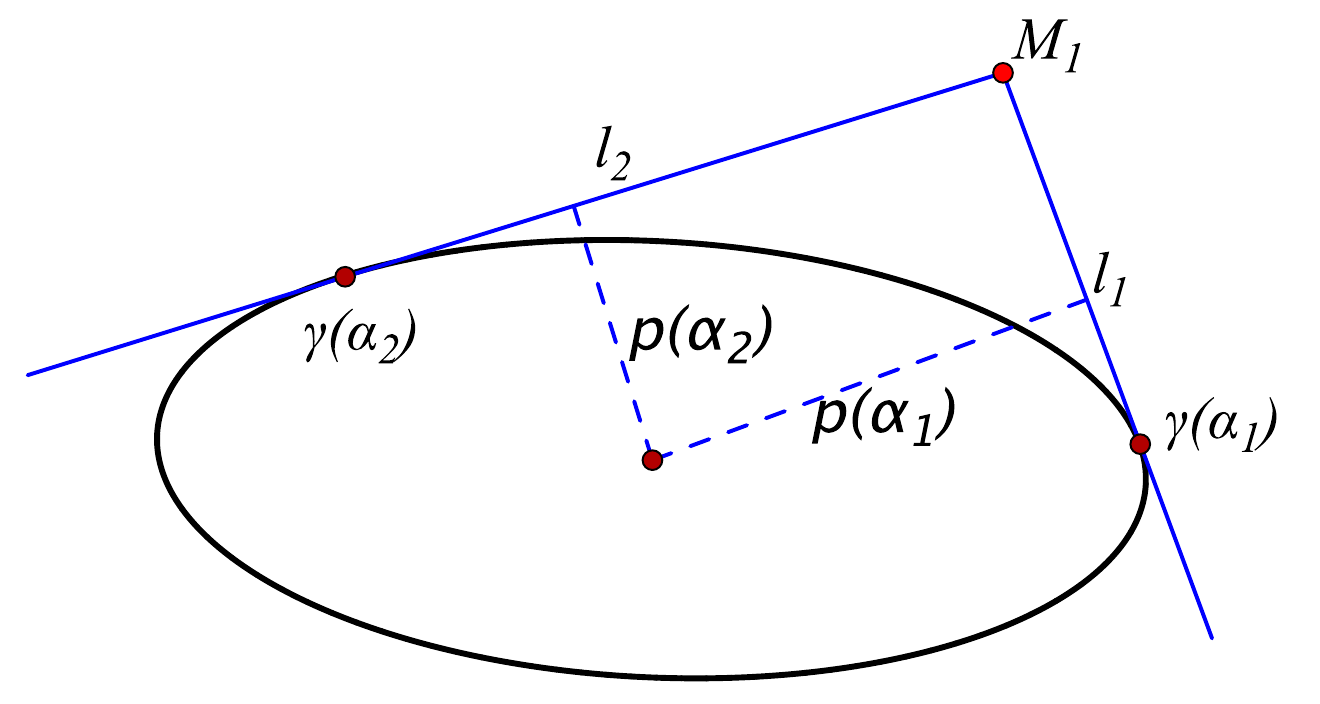}
	\caption{Computing $l_1, l_2$ in envelope coordinates.}
\end{figure}

Denote by $p$ the support function of $\g$ (see Figure \ref{lambda}).
In particular:
\begin{equation}
	\gamma (\alpha) = p(\alpha)\, (\cos \alpha, \sin \alpha) + p'(\alpha)\, (- \sin \alpha, \cos \alpha).\\
	\label{eq:envelope}
\end{equation}
 We have the following formulae for $l_1,l_2$, and $S$ (see Lemma 4.4 of \cite{BBS}).
 
\begin{lemma}\label{lemma:lambdas} 
One has
\begin{equation*}
\begin{aligned}	
&l_1=-p'(\alpha_1)+\frac{p(\alpha_2)}{\sin(\alpha_2-\alpha_1)}-
p(\alpha_1)\cot(\alpha_2-\alpha_1),\\
&l_2=p'(\alpha_2)+\frac{p(\alpha_1)}{\sin(\alpha_2-\alpha_1)}-p(\alpha_2)\cot(\alpha_2-\alpha_1),\\
&S(\alpha_1,\alpha_2)=l_1+l_2-\widearc{\g(x_1)\g(x_2)}=\bigg(p(\alpha_2)+p(\alpha_1)\bigg)\tan\left(\frac{\alpha_2-\alpha_1}{2}\right)-\int_{\alpha_1}^{\alpha_2}p d\alpha.
\end{aligned}
\end{equation*}
\end{lemma}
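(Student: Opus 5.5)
The plan is to compute everything from the envelope parametrization \eqref{eq:envelope}, using the orthonormal frame $e(\alpha)=(\cos\alpha,\sin\alpha)$, $e^\perp(\alpha)=(-\sin\alpha,\cos\alpha)$. The tangent line to $\g$ at $\g(\alpha)$ is $\{X: \langle X,e(\alpha)\rangle=p(\alpha)\}$, and its direction is $e^\perp(\alpha)$. The point $M_1$ is the intersection of the tangent lines at parameters $\alpha_1,\alpha_2$, where $\alpha_1<\alpha_2$ and $0<\alpha_2-\alpha_1<\pi$.

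\textbf{Step 1: locate $M_1$.} Write $M_1=\g(\alpha_1)+l_1 e^\perp(\alpha_1)$, with $l_1>0$ by the orientation convention: moving from $\g(\alpha_1)$ in the direction of increasing $\alpha$ reaches the corner. Substituting into $\langle M_1,e(\alpha_2)\rangle=p(\alpha_2)$ gives a linear equation for $l_1$. The ingredients are
\begin{align*}
\langle e(\alpha_1),e(\alpha_2)\rangle&=\cos(\alpha_2-\alpha_1),\\
\langle e^\perp(\alpha_1),e(\alpha_2)\rangle&=\sin(\alpha_2-\alpha_1).
\end{align*}
The equation therefore reads
$$
p(\alpha_1)\cos(\alpha_2-\alpha_1)+\bigl(p'(\alpha_1)+l_1\bigr)\sin(\alpha_2-\alpha_1)=p(\alpha_2).
$$
Solving it yields the stated formula for $l_1$.

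\textbf{Step 2: the formula for $l_2$.} Symmetrically, write $M_1=\g(\alpha_2)-l_2e^\perp(\alpha_2)$ and impose $\langle M_1,e(\alpha_1)\rangle=p(\alpha_1)$. Here $\langle e^\perp(\alpha_2),e(\alpha_1)\rangle=-\sin(\alpha_2-\alpha_1)$, and the same algebra gives $l_2$.

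\textbf{Step 3: the generating function $S$.} Add the two formulas. The terms combine as
$$
\frac{p(\alpha_1)+p(\alpha_2)}{\sin\theta}\,(1-\cos\theta)=\bigl(p(\alpha_1)+p(\alpha_2)\bigr)\tan(\theta/2), \qquad \theta=\alpha_2-\alpha_1,
$$
so that $l_1+l_2=(p(\alpha_1)+p(\alpha_2))\tan(\theta/2)+p'(\alpha_2)-p'(\alpha_1)$.

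For the arc length, differentiating \eqref{eq:envelope} gives $\g'(\alpha)=(p+p'')e^\perp(\alpha)$. Hence the speed is $p+p''>0$, and the arc length is $\int_{\alpha_1}^{\alpha_2}(p+p'')\,d\alpha=\int_{\alpha_1}^{\alpha_2}p\,d\alpha+p'(\alpha_2)-p'(\alpha_1)$. Subtracting, the $p'$ terms cancel, which gives the formula for $S$.

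\textbf{Main obstacle.} The only real difficulty is getting the signs and orientation conventions right. These are: which tangent point is $x_1$; the sign of $l_1$ relative to $e^\perp$; and the assumption $0<\alpha_2-\alpha_1<\pi$, which makes $\sin>0$ and places the corner on the correct side. I would fix these by checking the case of the unit circle, $p\equiv1$. There the formulas should give $l_1=l_2=\tan(\theta/2)$, and this confirms the chosen conventions.
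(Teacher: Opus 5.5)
Your proposal is correct and is essentially the paper's proof. Both compute the corner $M_1$ from the envelope parametrization and subtract the arc length $\int_{\alpha_1}^{\alpha_2}(p+p'')\,d\alpha$. Your single linear equation in the frame $e,e^\perp$ is a tidier version of the paper's route, which solves the $2\times 2$ system by Cramer's rule and then projects onto the $x$-axis, dividing by $\sin\alpha_1$.

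The unit-circle check only fixes the conventions. Positivity of your signed $l_1,l_2$ for a general oval follows because it is equivalent to the support inequalities $p(\alpha_2)>\langle\g(\alpha_1),e(\alpha_2)\rangle$ and $p(\alpha_1)>\langle\g(\alpha_2),e(\alpha_1)\rangle$, which hold for any strictly convex $\g$.
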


\proof Since $p''+p$ is the curvature radius of $\g$,
the arc length is given by 
$$
\widearc{\g(x_1)\g(x_2)}=\int_{\alpha_1}^{\alpha_2}(p''+p) d\alpha=p'(\alpha_2)-p'(\alpha_1)+\int_{\alpha_1}^{\alpha_2}p d\alpha.
$$ 
Using this fact and the first two formulas of the lemma, we obtain the expression for $S$.
	
	Now we compute $l_{1}, l_{2}$.
The coordinates of the points $\gamma(\alpha_1)$ and $\gamma(\alpha_2)$ can be found using formula (\ref{eq:envelope}), yielding 
	\begin{align*}
		\gamma (\alpha_1) &= p(\alpha_1)\, (\cos \alpha_1, \sin \alpha_1) + p'(\alpha_1)\, (- \sin \alpha_1, \cos \alpha_1),\\
		\gamma (\alpha_2) &= p(\alpha_2)\, (\cos \alpha_2, \sin \alpha_2) + p'(\alpha_2)\, (- \sin \alpha_2, \cos \alpha_2).
	\end{align*}
	
	Next, one needs to find the coordinates of the point $M$ as the intersection of the tangent lines. Thus  $(x_M,y_M)$, the coordinates of $M$, satisfy the system:
	\[
	\begin{cases}
		\cos\alpha_1 x+\sin\alpha_1 y=p(\alpha_1) \\
		\cos\alpha_2 x+\sin\alpha_2 y=p(\alpha_2).
	\end{cases}
	\]
	
	Solving this system, we get:
	$$
	(x_M,y_M)=\frac{1}{\sin(\alpha_2-\alpha_1)}\bigg(p(\alpha_1)\sin\alpha_2-
p(\alpha_2)\sin\alpha_1,\ p(\alpha_2)\cos\alpha_1-
	p(\alpha_1)\cos\alpha_2\bigg).
	$$
	Hence
	
	\begin{eqnarray*}
		l_1 &=& \frac{x_M-x_{\gamma(\alpha_1)}}{\cos(\alpha_1+\pi/2)} \\
		&=& - \frac{p(\alpha_1)\sin\alpha_2-
			p(\alpha_2)\sin\alpha_1-\sin(\alpha_2-\alpha_1)(p(\alpha_1) \cos \alpha_1- p'(\alpha_1)\sin \alpha_1)}{\sin\alpha_1{\sin(\alpha_2-\alpha_1)}}.
	\end{eqnarray*}
	Substituting into the numerator of this formula the identity
	$$\sin\alpha_2=\sin\alpha_1\cos(\alpha_2-\alpha_1)+\cos\alpha_1\sin(\alpha_2-\alpha_1),$$ 
	we obtain the first claim of  Lemma \ref {lemma:lambdas}.
	The second formula  is proved similalrly.
	\proofend

Now we are in position to compute the partial derivatives of the function $S$, see Figure \ref{RR}.

\begin{lemma}\label{lemma:partialsS}
The partial derivatives of $S(\alpha_1,\alpha_2)$ with respect to the first and the second arguments are
\begin{align*}
S_{1} =\frac{1}{2\cos^2 \omega/2}( p(\alpha_1)\cos\omega-p(\alpha_2)+ p'(\alpha_1)\sin\omega)=-l_1 \tan\frac \omega 2=-R_1,\\
S_{2} =\frac{1}{2\cos^2 \omega/2}(- p(\alpha_2)\cos\omega+p(\alpha_1)+ p'(\alpha_2)\sin\omega)=l_2 \tan\frac \omega 2=R_2,
\end{align*}
where $\omega:=\alpha_2-\alpha_1$, and $R_1,R_2$ are the radii of the auxiliary circles.
\end{lemma}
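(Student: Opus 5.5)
We differentiate the closed formula for $S$ from Lemma \ref{lemma:lambdas} directly and then compare with the expressions for $l_1,l_2$ given there. Write $t=\tan(\omega/2)$ with $\omega=\alpha_2-\alpha_1$. Since $\partial_{\alpha_1}\omega=-1$ and $\frac{d}{d\omega}\tan(\omega/2)=\frac{1}{2\cos^2(\omega/2)}$, we get
$$
S_1=p'(\alpha_1)\,t-\frac{p(\alpha_1)+p(\alpha_2)}{2\cos^2(\omega/2)}+p(\alpha_1).
$$
We then put everything over the common denominator $2\cos^2(\omega/2)$. This uses $t\cdot 2\cos^2(\omega/2)=\sin\omega$ and $2\cos^2(\omega/2)=1+\cos\omega$. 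The numerator becomes $p'(\alpha_1)\sin\omega-p(\alpha_1)-p(\alpha_2)+p(\alpha_1)(1+\cos\omega)$, which is exactly $p(\alpha_1)\cos\omega-p(\alpha_2)+p'(\alpha_1)\sin\omega$. The computation for $S_2$ is symmetric: now $\partial_{\alpha_2}\omega=+1$, and the term $-\int p$ contributes $-p(\alpha_2)$. The result is $-p(\alpha_2)\cos\omega+p(\alpha_1)+p'(\alpha_2)\sin\omega$ over the same denominator.

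Next we identify these with $\mp l_i\tan(\omega/2)$. Multiply the formula for $l_1$ from Lemma \ref{lemma:lambdas} by $t=\frac{\sin\omega}{1+\cos\omega}$:
$$
l_1 t=\frac{-p'(\alpha_1)\sin\omega+p(\alpha_2)-p(\alpha_1)\cos\omega}{1+\cos\omega}.
$$
This is $-S_1$, because $1+\cos\omega=2\cos^2(\omega/2)$. The same computation with the formula for $l_2$ gives $l_2t=S_2$. These are purely trigonometric simplifications.

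The step that needs an actual argument is the geometric identification $R_i=l_i\tan(\omega/2)$. Take the auxiliary circle tangent to $\g$ at $\g(\alpha_2)$ and to the tangent line at $\g(\alpha_1)$. Its center lies on the normal to $\g$ at $\g(\alpha_2)$ and also on the bisector of the angle at $M$ formed by the two tangent lines. The exterior angle between the tangent lines is $\omega$, so the interior angle at $M$ is $\pi-\omega$, and the bisector makes angle $(\pi-\omega)/2$ with each line. In the right triangle with vertices $M$, $\g(\alpha_2)$ and the center, the leg along the tangent line has length $l_2$. Hence the radius is $l_2\cot\frac{\pi-\omega}{2}=l_2\tan\frac{\omega}{2}$. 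The same argument, with the roles of the two tangent points exchanged, gives $R_1=l_1\tan(\omega/2)$ for the circle touching $\g$ at $\g(\alpha_1)$.

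The main obstacle is getting this picture right: the convention for which circle is labeled $R_1$, and the sign choices. I would check it against Figure \ref{RR} and confirm the sign of $\partial_{\alpha_1}$ with a symmetric example, such as a circle $p\equiv$ const.
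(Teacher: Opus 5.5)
Your differentiation of $S$, the reduction to the common denominator $2\cos^2\frac{\omega}{2}=1+\cos\omega$, and the checks $l_1\tan\frac{\omega}{2}=-S_1$, $l_2\tan\frac{\omega}{2}=S_2$ are all correct. This is the route the paper has in mind; it states the lemma without a written proof and refers to Figure \ref{RR} for the radii. However, the step you single out as the real content, $R_i=l_i\tan\frac{\omega}{2}$, is argued incorrectly: two errors cancel. First, the auxiliary circle tangent to $\g$ at $\g(\alpha_2)$ lies on the far side of that tangent line from $\g$, and it touches the \emph{extension} of the other tangent line beyond $M$. So it sits in an angle of size $\omega$ at $M$, and its center lies on the bisector of this exterior angle, not of the interior angle $\pi-\omega$. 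Second, in your right triangle (right angle at $\g(\alpha_2)$) the radius is the leg opposite $M$, so it equals $l_2$ times the \emph{tangent} of the angle at $M$. With your angle $\frac{\pi-\omega}{2}$ this gives $l_2\tan\frac{\pi-\omega}{2}=l_2\cot\frac{\omega}{2}$, not $l_2\cot\frac{\pi-\omega}{2}$.

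The circular test you propose detects the problem. For $p\equiv 1$ one has $l_2=\tan\frac{\omega}{2}$ and $S_2=\tan^2\frac{\omega}{2}$. But the interior bisector passes through the center of the table, and the circle centered there and tangent at $\g(\alpha_2)$ is the table itself, of radius $1=l_2\cot\frac{\omega}{2}$. The fix is immediate. With the center on the exterior bisector, the angle at $M$ is $\frac{\omega}{2}$; equivalently, the angle at the center is $\frac{\pi-\omega}{2}$, which is what your cotangent actually computes. Hence $R_2=l_2\tan\frac{\omega}{2}$. Symmetrically, the circle at $\g(\alpha_1)$ lies in the other (vertical) angle of size $\omega$ at $M$, and $R_1=l_1\tan\frac{\omega}{2}$. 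This agrees with the relation $|V_{i-\frac12}C_i|=r_i\cot\beta_{i-\frac12}$ used later in the proof of Lemma \ref{lm:coord}.
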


\begin{corollary} One has
	$dS=R_2d\alpha_2-R_1d\alpha_1$, so $dR\wedge d\alpha$ is an invariant symplectic form.
\end{corollary}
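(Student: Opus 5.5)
The plan is to read the corollary off Lemma \ref{lemma:partialsS} and then apply the standard generating-function argument for twist maps. Here $S(\alpha_1,\alpha_2)$ is the generating function of $T$, and we work in the coordinates $(\alpha,R)$ on the exterior of $\g$. A point $M_1$ is encoded by the direction $\alpha_1$ of the first tangent line through it, together with the radius $R_1$ of the auxiliary circle tangent to $\g$ at $\g(\alpha_2)$ and to the tangent line at $\g(\alpha_1)$.

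First I check that $(\alpha_1,R_1)$ really are coordinates. By Lemma \ref{lemma:partialsS}, $R_1=l_1\tan\frac{\omega}{2}$ with $\omega=\alpha_2-\alpha_1\in(0,\pi)$. For fixed $\alpha_1$ this is strictly monotone in $\alpha_2$. Indeed, as $\alpha_2$ increases, $l_1$ increases and $\tan(\omega/2)$ increases. Equivalently, $\partial R_1/\partial\alpha_2=-S_{12}>0$, which is the twist condition of \cite{AT}. So the map $(\alpha_1,\alpha_2)\mapsto(\alpha_1,R_1)$ is a local diffeomorphism, and $dR\wedge d\alpha$ is a genuine nondegenerate area form.

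Next, the formula $dS=S_1\,d\alpha_1+S_2\,d\alpha_2$ together with $S_1=-R_1$ and $S_2=R_2$ gives directly
$$dS=R_2\,d\alpha_2-R_1\,d\alpha_1 .$$
The identification of $S_2$ with $R_2$ rests on one geometric point. The image point $M_2=T(M_1)$ has first tangent direction $\alpha_2$. Its auxiliary circle is tangent to the line at $\g(\alpha_2)$ and to $\g$ at $\g(\alpha_3)$. By the construction of $T$, this circle is the same circle used in defining $M_2$: it is tangent to the line through $M_1$ at $\g(\alpha_2)$ and to the line through $M_2$. So the radius $R_2=l_2\tan\frac{\omega}{2}$ appearing in $S_2$ is exactly the $R$-coordinate of $M_2$. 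This matching is the step requiring the most care, since one must confirm that the circle in Lemma \ref{lemma:partialsS} is the one attached to $M_2$ in the chosen coordinate convention. It follows from the equal-tangent property of a circle inscribed in an angle, which gives radius $=l_2\tan(\omega/2)$.

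Finally, applying $d$ to the identity $dS=R_2\,d\alpha_2-R_1\,d\alpha_1$ gives
$$0=d^2S=dR_2\wedge d\alpha_2-dR_1\wedge d\alpha_1 .$$
Since $T(\alpha_1,R_1)=(\alpha_2,R_2)$, this says $T^*(dR\wedge d\alpha)=dR\wedge d\alpha$. Hence $T$ preserves the symplectic form $dR\wedge d\alpha$, which proves the corollary.
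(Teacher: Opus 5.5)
Your algebraic skeleton is the right one, and it is what the paper has in mind. The corollary is read off from Lemma~\ref{lemma:partialsS} as $dS=S_1\,d\alpha_1+S_2\,d\alpha_2=R_2\,d\alpha_2-R_1\,d\alpha_1$. Invariance of $dR\wedge d\alpha$ then follows from $d^2S=0$, once one knows that $T$ acts as $(\alpha_1,R_1)\mapsto(\alpha_2,R_2)$; this is the generating-function property of $S$ from \cite{AT}.

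The trouble is in the step you single out as the delicate one: you attach the wrong circle to a point. The circle tangent to $\gamma$ at $\gamma(\alpha_2)$ and to the tangent line at $\gamma(\alpha_1)$ sits in the angle of size $\omega$ at $M_1$, with tangent length $l_2$ from $M_1$. Its radius is therefore $l_2\tan\frac{\omega}{2}=R_2$, not $R_1=l_1\tan\frac{\omega}{2}$, so your verbal definition of the coordinate contradicts the formula you use one line later.

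The matching argument then breaks. With your convention, the auxiliary circle of $M_2$ is the one touching $\gamma$ at $\gamma(\alpha_3)$. But the circle used to construct $M_2=T(M_1)$ touches $\gamma$ at $\gamma(\alpha_2)$; the one at $\gamma(\alpha_3)$ is used to construct $T(M_2)$. Both circles are tangent to the lines $M_1M_2$ and $M_2\gamma(\alpha_3)$, so your description does not tell them apart. They are nevertheless distinct, lying in the two vertical angles of size $\alpha_3-\alpha_2$ at $M_2$. As written, the claim ``$R_2$ is the $R$-coordinate of $M_2$'' is false for the coordinate you defined.

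The repair is to reverse the convention. The $R$-coordinate of a point $M$ with tangency directions $\alpha<\alpha'$ should be the radius of the circle outside $\gamma$ that is tangent to $\gamma$ at the \emph{first} tangency point $\gamma(\alpha)$ and to the second tangent line. This radius is $|M-\gamma(\alpha)|\tan\frac{\alpha'-\alpha}{2}$, it agrees with $R_1=-S_1$, and the circle is the one that produced $M$ from $T^{-1}(M)$.

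For $M_2$, this is the circle touching $\gamma$ at $\gamma(\alpha_2)$ and tangent to the line at $\gamma(\alpha_3)$. The circle used in constructing $T(M_1)$ has both properties, and such a circle lying outside $\gamma$ is unique, so the two coincide. Measuring its radius from $M_1$ gives $l_2\tan\frac{\omega}{2}=R_2$, and measuring from $M_2$ gives $R_1(\alpha_2,\alpha_3)$. This is exactly the relation $S_2(\alpha_1,\alpha_2)+S_1(\alpha_2,\alpha_3)=0$.

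With this correction, the rest of your proof goes through unchanged, including the check $\partial R_1/\partial\alpha_2=-S_{12}>0$, which already uses the correct $R_1$.
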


\begin{figure}[h] 
	\centering
	\includegraphics[width=0.75\linewidth]{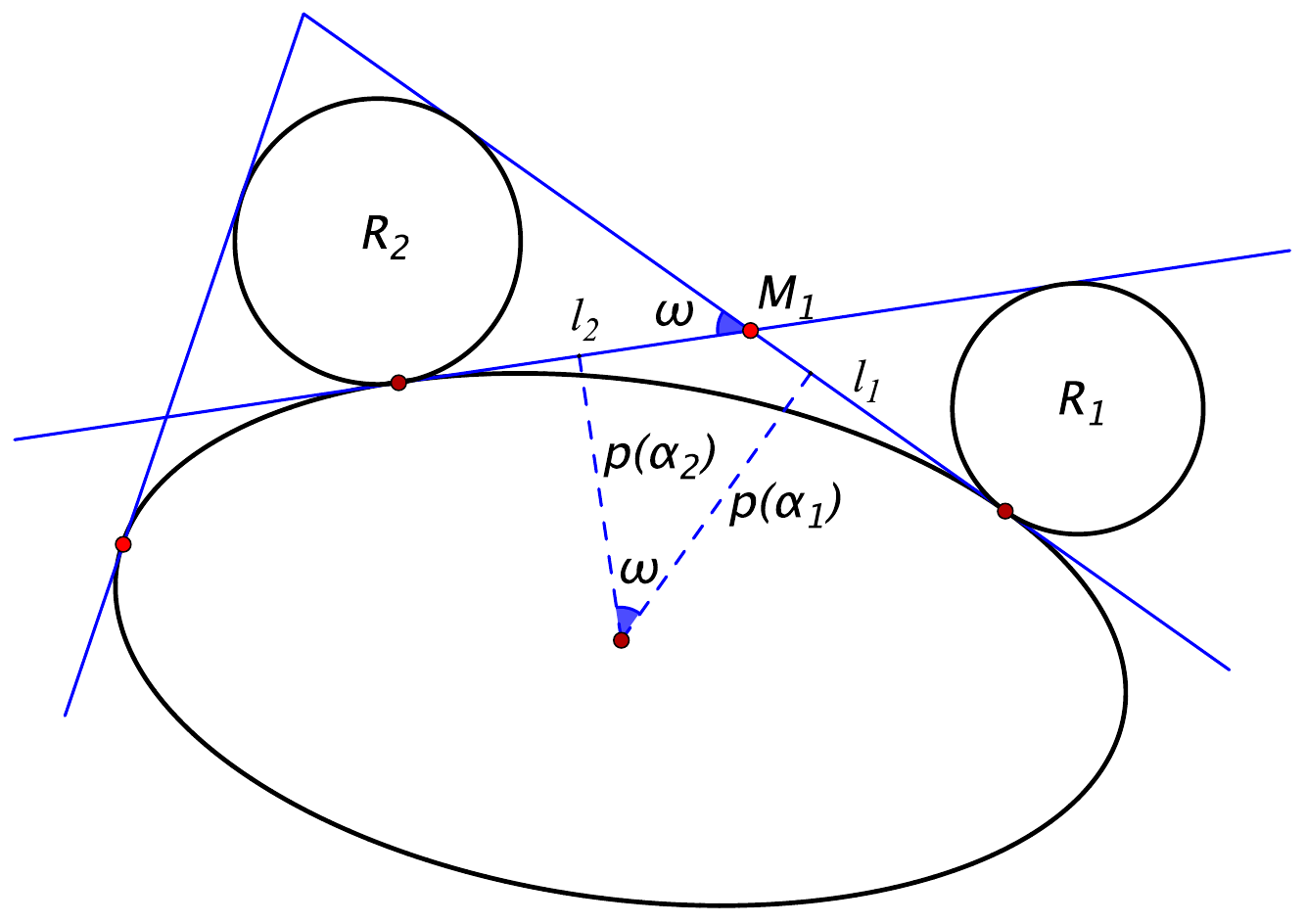}
	\caption{Invariant symplectic form $dR\wedge d\alpha$.}
	\label{RR}
\end{figure}

Next we need to compute second partial derivatives of $S$.
\begin{lemma}
	The second partial derivatives of $S$ are as follows:
\begin{align*}
	&S_{11}=\tan \frac \omega 2(R_1+\rho (\alpha_1)),\\
	&S_{22}=\tan \frac \omega 2(R_2+\rho (\alpha_2)),\\
	&S_{12}=-\frac 1 {\sin\omega }(R_1+R_2), 
\end{align*}
where $\omega=\alpha_2-\alpha_1$ and $\rho$ is the radius of curvature of $\g$.
	In particular $S_{12}<0$ and $S_{11}, S_{22}>0$.
\end{lemma}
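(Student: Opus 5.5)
We differentiate the first-order formulas of Lemma \ref{lemma:partialsS} once more, using the explicit trigonometric expressions rather than the geometric ones. Recall $\omega=\alpha_2-\alpha_1$, so $\partial\omega/\partial\alpha_1=-1$ and $\partial\omega/\partial\alpha_2=1$. Throughout we work with the prefactor $\frac{1}{2\cos^2(\omega/2)}=\frac{1}{1+\cos\omega}$, and we use the expressions
$$l_1\tan\tfrac\omega2=R_1,\qquad l_2\tan\tfrac\omega2=R_2,$$
together with the formulas for $l_1,l_2$ from Lemma \ref{lemma:lambdas}. The aim is to recognise the results in terms of $R_1,R_2$ and $\rho=p+p''$.

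\textbf{Mixed derivative.} We compute $S_{12}=\partial_{\alpha_1}S_2$. In $S_2=\frac{1}{1+\cos\omega}\bigl(p(\alpha_1)-p(\alpha_2)\cos\omega+p'(\alpha_2)\sin\omega\bigr)$, the variable $\alpha_1$ enters through $p(\alpha_1)$ and through $\omega$. Hence
$$S_{12}=\frac{p'(\alpha_1)-p(\alpha_2)\sin\omega-p'(\alpha_2)\cos\omega}{1+\cos\omega}-\frac{\sin\omega\,(p(\alpha_1)-p(\alpha_2)\cos\omega+p'(\alpha_2)\sin\omega)}{(1+\cos\omega)^2}.$$
It is simplest to multiply by $-\sin\omega$ and compare with
$$R_1+R_2=\frac{1}{1+\cos\omega}\Bigl[(p(\alpha_1)-p(\alpha_2))(1-\cos\omega)+(p'(\alpha_2)-p'(\alpha_1))\sin\omega\Bigr],$$
which is obtained from $R_1=-S_1$ and $R_2=S_2$. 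The comparison uses the identity $\sin^2\omega=(1-\cos\omega)(1+\cos\omega)$. Collecting the coefficients of $p(\alpha_1)$, $p(\alpha_2)$, $p'(\alpha_1)$ and $p'(\alpha_2)$ separately should give $S_{12}=-(R_1+R_2)/\sin\omega$. As a sanity check, computing $\partial_{\alpha_2}S_1$ instead must give the same answer.

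\textbf{Diagonal derivatives.} We compute $S_{22}=\partial_{\alpha_2}S_2$. Differentiating $p'(\alpha_2)\sin\omega$ produces $p''(\alpha_2)\sin\omega$. Adding and subtracting $p(\alpha_2)\sin\omega$ turns this into $\rho(\alpha_2)\sin\omega$. Since $\frac{\sin\omega}{1+\cos\omega}=\tan\frac\omega2$, this term contributes exactly $\tan\frac\omega2\,\rho(\alpha_2)$. The remaining terms should combine to $\tan\frac\omega2\,R_2$, by the same kind of coefficient matching as above. The computation of $S_{11}$ is symmetric: differentiating $p'(\alpha_1)\sin\omega$ gives $\rho(\alpha_1)$ times $\tan\frac\omega2$, and the rest gives $\tan\frac\omega2\,R_1$. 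The sign is positive because both the derivative of $-S_1$ and the factor $\partial\omega/\partial\alpha_1=-1$ flip sign, and the two flips cancel.

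\textbf{Signs.} We have $0<\omega<\pi$, since $\alpha_2-\alpha_1$ is the exterior angle at the vertex $M$. The radii satisfy $R_1,R_2>0$ because $l_i>0$ and $\tan\frac\omega2>0$, and $\rho>0$ because $\gamma$ is an oval. Therefore $\sin\omega>0$ and $\tan\frac\omega2>0$, which give $S_{12}<0$ and $S_{11},S_{22}>0$.

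The main obstacle is the trigonometric bookkeeping in the remaining terms: verifying that everything except the $\rho$ contribution collapses to $\tan\frac\omega2\,R_i$, and that the mixed derivative collapses to $-(R_1+R_2)/\sin\omega$. For this we will rewrite everything over the common denominator $(1+\cos\omega)^2$ and use half-angle identities.
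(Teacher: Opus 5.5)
Your route (differentiate the closed forms of $S_1,S_2$ again with the quotient rule) is the natural one. The paper states this lemma without a written proof, so it is presumably exactly this computation, and your sign argument at the end is correct. However, the one formula your mixed-derivative step rests on has a sign error. From $R_1=-S_1$ and $R_2=S_2$ one gets
$$R_1+R_2=\frac{1}{1+\cos\omega}\Bigl[\bigl(p(\alpha_1)+p(\alpha_2)\bigr)(1-\cos\omega)+\bigl(p'(\alpha_2)-p'(\alpha_1)\bigr)\sin\omega\Bigr],$$
with a plus, not a minus. For the unit circle your version gives $R_1+R_2=0$, whereas in fact $R_1=R_2=\tan^2\frac\omega2$. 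With your version, the coefficient matching you propose fails for $p(\alpha_2)$. In $S_{12}$ its coefficient is $-\sin\omega/(1+\cos\omega)^2$, while your $-(R_1+R_2)/\sin\omega$ gives $+\sin\omega/(1+\cos\omega)^2$. With the correct sum, multiply your expression for $S_{12}$ by $(1+\cos\omega)^2$ and use $\sin^2\omega+\cos^2\omega=1$ to get
$$S_{12}=\frac{p'(\alpha_1)-p'(\alpha_2)}{1+\cos\omega}-\frac{\bigl(p(\alpha_1)+p(\alpha_2)\bigr)\sin\omega}{(1+\cos\omega)^2}.$$
This equals $-(R_1+R_2)/\sin\omega$ because $(1-\cos\omega)/\sin\omega=\sin\omega/(1+\cos\omega)$.

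For the diagonal terms no coefficient matching is needed, so you can replace ``should combine'' by a one-line check. The $\alpha_2$-derivative of the numerator $p(\alpha_1)-p(\alpha_2)\cos\omega+p'(\alpha_2)\sin\omega$ is exactly $\bigl(p(\alpha_2)+p''(\alpha_2)\bigr)\sin\omega=\rho(\alpha_2)\sin\omega$. The $p'(\alpha_2)\cos\omega$ terms cancel, and $p(\alpha_2)\sin\omega$ comes from differentiating $-p(\alpha_2)\cos\omega$, so nothing needs to be added and subtracted. Differentiating the factor $1/(1+\cos\omega)$ then contributes $\tan\frac\omega2\, S_2=\tan\frac\omega2\, R_2$. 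Likewise for $S_{11}$: the numerator gives $\rho(\alpha_1)\sin\omega$, and since $\partial_{\alpha_1}(1+\cos\omega)=\sin\omega$, the denominator contributes $-\tan\frac\omega2\, S_1=\tan\frac\omega2\, R_1$. With the sign in $R_1+R_2$ fixed, your argument is complete.
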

\begin{corollary}
	The maps $T$ and $T^2$ are positive twist maps of the phase cylinder with respect to the coordinates $R, \alpha$.
\end{corollary}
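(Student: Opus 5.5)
The plan is to work in the symplectic coordinates $(\alpha,R)$, with $R$ the radius of the auxiliary circle. In these coordinates the generating function $S(\alpha_1,\alpha_2)$ satisfies $dS=R_2\,d\alpha_2-R_1\,d\alpha_1$, so $T(\alpha_1,R_1)=(\alpha_2,R_2)$ with
\[
R_1=-S_1(\alpha_1,\alpha_2),\qquad R_2=S_2(\alpha_1,\alpha_2).
\]

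\textbf{Twist for $T$.} Fix $\alpha_1$. By the previous lemma, $\partial R_1/\partial\alpha_2=-S_{12}>0$. So along a fiber $\alpha_1=\mathrm{const}$, $R_1$ is a strictly increasing function of $\alpha_2\in(\alpha_1,\alpha_1+\pi)$. Equivalently, $\partial\alpha_2/\partial R_1=-1/S_{12}>0$. This is exactly the positive twist condition: the image of a vertical fiber is a graph over $\alpha$, and the angle $\alpha_2$ increases with $R_1$.

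I would also check the boundary behaviour. As $\omega\to 0^+$ we have $R_1\to 0$, and as $\omega\to\pi^-$ we have $R_1\to\infty$, because $\tan(\omega/2)\to\infty$ while $l_1$ stays bounded below. This guarantees the twist condition holds on the whole phase cylinder $\{R>0\}$.

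\textbf{Twist for $T^2$.} I would use the standard criterion for a composition of twist maps: a composition of two positive twist maps (each mapping verticals to curves with monotone $\alpha$) is again positive twist. Concretely, fix $\alpha_1$ and let $(\alpha_2,R_2)=T(\alpha_1,R_1)$ and $(\alpha_3,R_3)=T(\alpha_2,R_2)$. The relevant data are:
\begin{itemize}
\item $\alpha_2$ is an increasing function of $R_1$ with $d\alpha_2/dR_1=-1/S_{12}(\alpha_1,\alpha_2)>0$;
\item $R_2=S_2(\alpha_1,\alpha_2)$, so $dR_2/dR_1=S_{22}(\alpha_1,\alpha_2)\cdot d\alpha_2/dR_1>0$, using $S_{22}>0$.
\end{itemize}
Next, $\alpha_3$ is determined by $R_2=-S_1(\alpha_2,\alpha_3)$. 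Differentiating,
\[
dR_2=-S_{11}(\alpha_2,\alpha_3)\,d\alpha_2-S_{12}(\alpha_2,\alpha_3)\,d\alpha_3,
\]
so
\[
\frac{d\alpha_3}{dR_1}=\frac{1}{-S_{12}(\alpha_2,\alpha_3)}\left(\frac{dR_2}{dR_1}+S_{11}(\alpha_2,\alpha_3)\frac{d\alpha_2}{dR_1}\right).
\]
Every term here is positive, since $S_{11},S_{22}>0$ and $-S_{12}>0$. Hence $d\alpha_3/dR_1>0$, which is positive twist for $T^2$.

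Equivalently, $T^2$ has the generating function $S(\alpha_1,\alpha_2)+S(\alpha_2,\alpha_3)$ with $\alpha_2$ eliminated. Its mixed partial $\partial^2/\partial\alpha_1\partial\alpha_3$ equals
\[
-\frac{S_{12}(\alpha_1,\alpha_2)\,S_{12}(\alpha_2,\alpha_3)}{S_{22}(\alpha_1,\alpha_2)+S_{11}(\alpha_2,\alpha_3)},
\]
and the positive denominator makes the sign definite.

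\textbf{Main obstacle.} The step needing care is the global one: showing that $T^2$ maps each fiber onto a monotone curve spanning the appropriate range of $\alpha_3$, and not merely with positive local derivative. I would handle it by composing the boundary limits of $T$. As $R_1\to 0$, both $\alpha_2\to\alpha_1$ and $R_2\to 0$, so $\alpha_3\to\alpha_1$. As $R_1\to\infty$, $\alpha_2\to\alpha_1+\pi$ and $R_2\to\infty$, so $\alpha_3\to\alpha_1+2\pi$. Together with the strict monotonicity above, this gives a diffeomorphism of each fiber onto the interval $(\alpha_1,\alpha_1+2\pi)$.
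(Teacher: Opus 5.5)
Your argument is correct and is essentially the paper's. The paper multiplies the two Jacobians $DT$ and reads off $\partial\alpha''/\partial R=\bigl(S_{22}(\alpha_1,\alpha_2)+S_{11}(\alpha_2,\alpha_3)\bigr)/\bigl(S_{12}(\alpha_1,\alpha_2)\,S_{12}(\alpha_2,\alpha_3)\bigr)>0$, which is exactly what your chain-rule computation gives; your boundary-limit check of the global twist is an extra the paper does not include.

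One correction: the ``standard criterion'' you cite, that a composition of two positive twist maps is again a positive twist map, is false in general. For the standard map with parameter $k>2$ one gets $\partial x''/\partial y=2+k\cos x'$, which changes sign. What makes $T^2$ a twist map here is precisely $S_{11},S_{22}>0$, and your explicit computation correctly relies on that.
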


\proof
	The differential  of $T:(R,\alpha)\mapsto(R',\alpha')$ can be written in the form:
	$$
	DT=\begin{pmatrix}
		-\frac{S_{22}}{S_{12}} & \frac{S_{12}^2-S_{11}S_{22}}{S_{12}}\\
		-\frac{1}{S_{12}}&-\frac{S_{11}}{S_{12}} 
	\end{pmatrix}.
	$$
	Thus $\frac{\partial \alpha'}{\partial R}=-\frac{1}{S_{12}}>0$. 
	Moreover, since $S_{11} $ and $S_{22}$ are strictly positive, we see that  the product of two matrices $DT(R,\alpha)\cdot DT(R',\alpha')$ also has a positive entry $\frac{\partial \alpha''}{\partial R}>0$. Hence $T^2$ is  a positive twist map as well. 
	\proofend
	
	As an application of the twist map property for $T,T^2$, we have
	
	\begin{theorem} \label{thm:34}
		The sets of 3-periodic orbits and 4-periodic points of the outer length billiard have empty interior. 
	\end{theorem}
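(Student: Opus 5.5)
The plan is to use the standard argument for twist maps: if the set of $n$-periodic points had nonempty interior, there would be an open set $U$ of the phase cylinder on which $T^n=\mathrm{Id}$. We treat $n=3$ with $T$ and $n=4$ with $F:=T^2$. In both cases we first rewrite the periodicity condition as an equality between a forward and a backward iterate. For $n=3$, on $U$ we have $T^2=T^{-1}$. For $n=4$, on $U$ we have $F=F^{-1}$, that is, $T^2=T^{-2}$. So in each case a map $G$ (namely $T$ or $T^2$) satisfies, on an open set, $G^{k}=G^{-1}$ with $k=2$ for $n=3$ and $k=1$ for $n=4$. Here we use that both $T$ and $T^2$ are positive twist maps in the coordinates $(R,\alpha)$, as stated in the Corollary above.

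Case $n=4$. On $U$ we have $T^2\circ T^2=\mathrm{Id}$. Fix a vertical segment $\{\alpha=\mathrm{const}\}$ inside $U$ and parametrize it by $R$. Because $T^2$ is a positive twist map, $\partial\alpha''/\partial R>0$ along the segment, so the image of the segment under $T^2$ is a curve that advances monotonically in the $\alpha$ direction as $R$ increases. For the inverse map $T^{-2}$ the twist has the opposite sign: from $DT^{-1}$ one reads $\partial\alpha^{(-1)}/\partial R=+1/S_{12}<0$, and composing two such inverses preserves the sign, by the same positivity of $S_{11},S_{22}$ used for $T^2$. Hence $T^{-2}$ moves the segment so that $\alpha$ decreases as $R$ increases. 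Since $T^2=T^{-2}$ on $U$, the derivative $\partial\alpha/\partial R$ of the common image curve would have to be both positive and negative, which is a contradiction.

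Case $n=3$. On $U$ we have $T^2=T^{-1}$. The Corollary gives $\partial(\alpha\circ T^2)/\partial R>0$, while $\partial(\alpha\circ T^{-1})/\partial R=1/S_{12}<0$ by the explicit matrix $DT$ together with the sign $S_{12}<0$. This gives the same contradiction.

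The main obstacle is a precise check of the sign of $\partial\alpha/\partial R$ for $T^{-1}$ and $T^{-2}$. I would compute $DT^{-1}$ directly as the inverse of the given $DT$. Since $\det DT=1$, its lower-left entry is $+1/S_{12}$. For $T^{-2}$ I would redo the $2\times 2$ product computation exactly as in the proof of the Corollary, confirming that the lower-left entry has a definite (negative) sign. Since the $n=3$ claim concerns orbits, and an orbit consists of periodic points, the orbit version follows from the point version.
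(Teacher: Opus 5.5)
Your proposal is correct and follows essentially the paper's argument. On an open set of $n$-periodic points, $T^2$ coincides with $T^{-2}$ (for $n=4$) or with $T^{-1}$ (for $n=3$), and the positive twist of $T^2$ on the vertical direction $\partial/\partial R$ contradicts the negative twist of $T^{-2}$, respectively $T^{-1}$. The sign check you flag as the main obstacle is immediate without redoing the product: $DT$ has determinant one, and the inverse of a $2\times 2$ matrix of determinant one has the negated lower-left entry, so $T^{-k}$ is a negative twist whenever $T^k$ is a positive one.
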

	
\proof
We shall give a proof for the set of 4-periodic orbits.
	The other case is analogous (and we provide two other proofs in the 3-periodic case later).
	
Arguing toward  contradiction, assume that  there is a disc of 4-periodic points. Choose a 
	point $P_0$  in the disc and let $v_0=\frac{\partial}{\partial R}$ be the vertical tangent vector. Define $v_i:=DT^i(v_0), i=1,2,3.$ By 4-periodicity, we have   $$ DT^4(v_0)=v_0.$$
	Let $z_i, i=0,\ldots,3$, be the $\alpha$-coordinates of the vectors $v_i$. Then $z_0=0$, and
	since $T,T^2$ are positive twist maps, we have that $z_1,z_2$ are strictly positive.  On the other hand,
	the maps $T^{-1},T^{-2}$ are negative twist maps (as the inverses of  positive twists), and hence
	 $z_3$ and $z_2$ are strictly negative, since $DT^{-1}v_0=v_3, DT^{-2}v_0=v_2 $. This is a contradiction.
\proofend
	
	\begin{remark}
	{\rm	
		1. Analogously one can prove that if $T,T^2,..,T^k, k\geq 1$ are positive twist maps, then the sets of periodic orbits of periods $2,3,\ldots,2k$ have empty interior.
		
		2. If all  positive powers of $T$
		are positive twist maps, then $T$ is totally integrable.  This situation is proved to be very rigid for many billiard models, see \cite{BM-survey}.
		}
	\end{remark}
	
\section{Periodic invariant curves and a  distribution on the space of polygons} \label{sect:dis}

Our goal in this section is to show that, for every $n\ge3$, there exists a functional space of outer length billiard curves that possess invariant curves consisting of $n$-periodic points. Our approach is via sub-Riemannian geometry, following \cite{BZ,La,GT}.

Let $\g$ be an oval, the outer length billiard table, and assume that there exists an invariant curve consisting of $n$-periodic points. 

\begin{figure}[ht]
\centering
\includegraphics[width=2.7in]{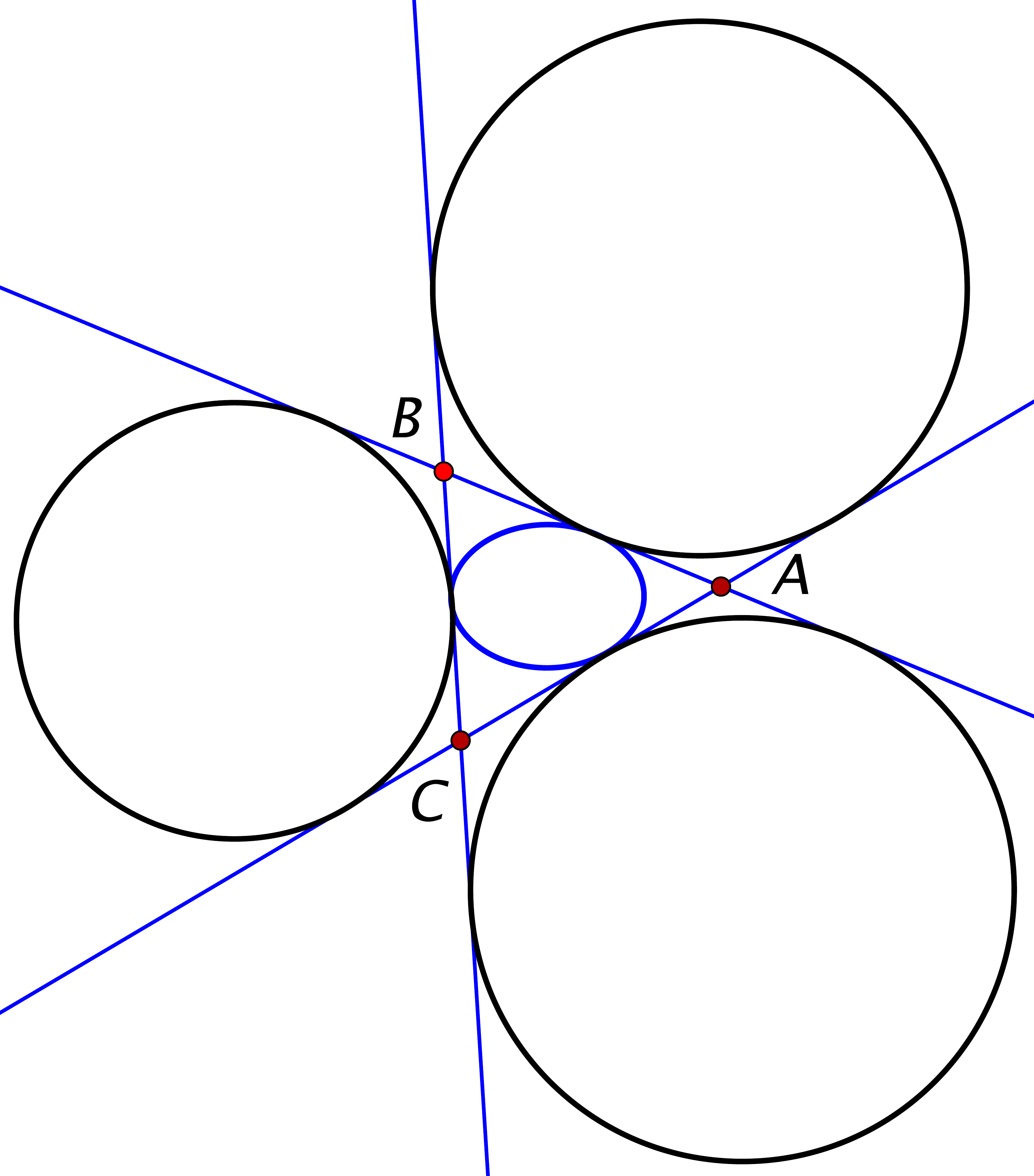}
\caption{$ABC$ is a 3-periodic orbit of the outer length billiard.}
\label{inv}
\end{figure}

For example, consider Figure \ref{inv}. If the 3-periodic orbit $ABC$ belongs to a 1-parameter family of such orbits then the sides of these triangles envelop the oval $\g$. That is, the  motion of each side is an infinitesimal rotation about the tangency point of the respective circle with this side. 

This motivates the following definitions. 

 Let $\P_n$ be the space of convex $n$-gons in the plane, a $2n$-dimensional manifold. 
 We think of a polygon as a cyclically ordered set of its sides $L_i,\ i=1,\ldots,n$, cooriented by the external normals. 
 For the vertices of the polygon, we use the notation $V_{i+\frac12} := L_i \cap L_{i+1}$.
 The vertices and the sides are cyclically ordered counterclockwise. 
 
 Choose an origin, and let $(\alpha_i,p_i)$ be the support coordinates of $L_i$, that is, $\alpha_i$ is the direction of the coorienting normal and $p_i$ is the signed distance from $L_i$ to the origin (if the origin is inside the polygon, all distances are positive).
  Then the unit coorienting vector is $(\cos\alpha_i,\sin\alpha_i)$, and the equation of the line in Cartesian coordinates is $x\cos\alpha_i+y\sin\alpha_i=p_i$. 
 
 Define an $n$-dimensional distribution $\D_n$ on ${\mathcal P}_n$.  Consider three consecutive lines $L_{i-1}, L_i, L_{i+1}$. Consider the circle that is tangent to these lines (there are four such circles) and lies on the negative sides of the cooriented lines $L_{i-1}$ and $L_{i+1}$ and on the positive side of the line $L_i$. Let $C_i$ be the tangency point of this circle with the line $L_i$, see Figure \ref{defd}.  Let $\xi_i$ be the infinitesimal rotation of the line $L_i$ about point $C_i$. The distribution $\D_n$ is spanned by the vector fields $\xi_i,\ i=1,\ldots, n$. 
  
 \begin{figure}[ht]
\centering
\includegraphics[width=2.5in]{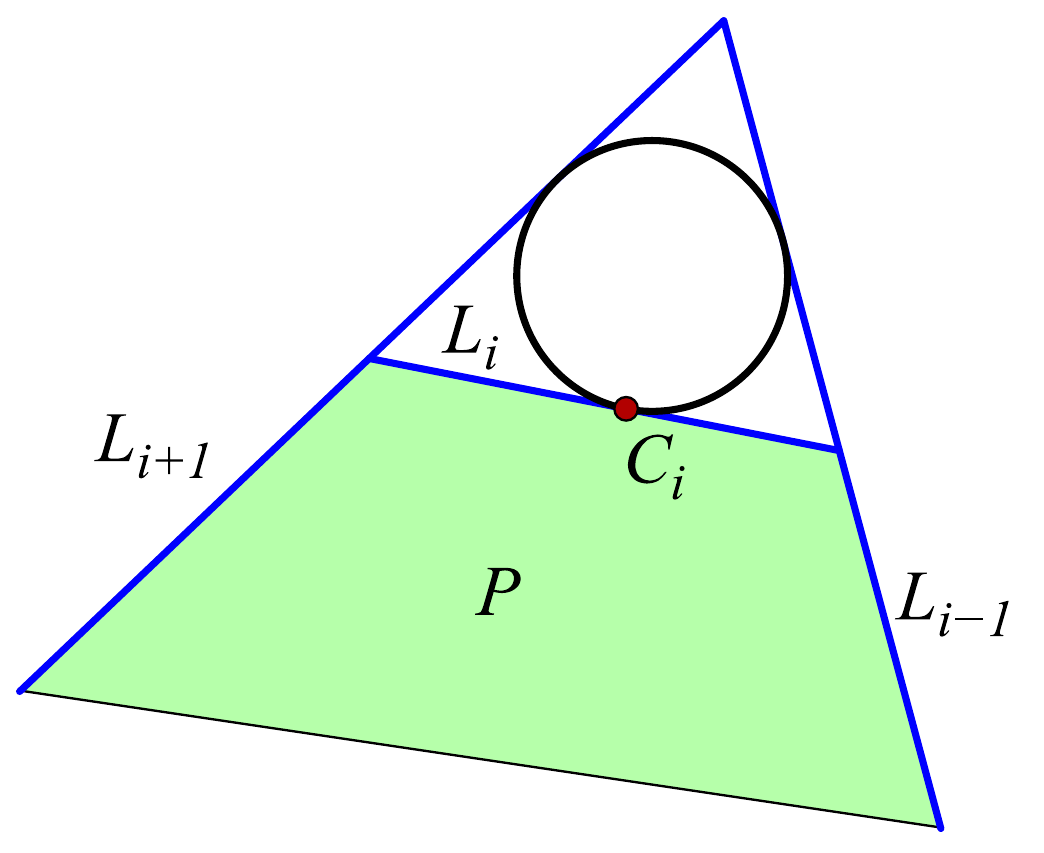}
\caption{Definition of the distribution $\D_n$.}
\label{defd}
\end{figure}
 
 Let $F: \P_n \to \R$ be the perimeter of a polygon.
 
 \begin{lemma} \label{lm:per}
 The distribution $\D_n$ is tangent to the level hypersurfaces of the function $F$ on $\P_n$.
 \end{lemma}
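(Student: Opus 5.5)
We must show $dF(\xi_i)=0$ for each $i$. Only the side $L_i$ moves under $\xi_i$, so only the two vertices $V_{i-\frac12}=L_{i-1}\cap L_i$ and $V_{i+\frac12}=L_i\cap L_{i+1}$ move. Each slides along the fixed lines $L_{i-1}$ and $L_{i+1}$, respectively. The perimeter changes only through the lengths of $L_{i-1}$, $L_i$ and $L_{i+1}$.

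The plan is to compute this variation using the tangential decomposition of side lengths at the tangency points of the circle. Let $\Omega$ be the circle from the definition, tangent to $L_{i-1}$, $L_i$, $L_{i+1}$ at points $C_{i-1}'$, $C_i$, $C_{i+1}'$. Write
$$|V_{i-\frac12}V_{i+\frac12}|=|V_{i-\frac12}C_i|\pm|C_iV_{i+\frac12}|,$$
with the appropriate signs dictated by the side conditions on $\Omega$. Tangent segments from a vertex to $\Omega$ are equal, so $|V_{i-\frac12}C_i|=|V_{i-\frac12}C_{i-1}'|$ and $|V_{i+\frac12}C_i|=|V_{i+\frac12}C_{i+1}'|$.

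The main step is a first-order observation. Under an infinitesimal rotation of $L_i$ about $C_i$, the line remains tangent to a circle $\Omega_t$ that osculates: one can choose the one-parameter family of circles tangent to $L_{i-1}$, to $L_{i+1}$ and to the rotated $L_i$, and the tangency point moves only to second order. Cleaner is the following direct computation. Let $V_{i-\frac12}$ move along $L_{i-1}$ with velocity $u$, and let $\theta$ be the angle between $L_{i-1}$ and $L_i$. If $\dot\phi$ is the angular speed, the vertex velocity satisfies $u\sin\theta=\dot\phi\,|C_iV_{i-\frac12}|$. The length change from this endpoint is then
\begin{itemize}
\item[] $\pm u$ for side $L_{i-1}$;
\item[] $u\cos\theta$ (projection) for side $L_i$, since $C_i$ is fixed.
\end{itemize}
Summing, the contribution of this vertex is $u(\pm1+\cos\theta)$, up to sign conventions. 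The analogous contribution at $V_{i+\frac12}$ is $u'(\pm1+\cos\theta')$. Using $u=\dot\phi\, d/\sin\theta$, with $d$ the common tangent length $|C_iV_{i-\frac12}|$, the total contribution at $V_{i-\frac12}$ becomes $\dot\phi\, d\tan(\theta/2)$ or $\dot\phi\, d\cot(\theta/2)$. This equals $\dot\phi\, r$, where $r$ is the radius of $\Omega$, because $r=d\tan(\text{half angle})$ at that vertex. The same holds at $V_{i+\frac12}$, with $r$ equal to the same radius but with the opposite sign, since the two vertices lie on opposite sides of the rotation center $C_i$. Hence the total is $\dot\phi(r-r)=0$. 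This mirrors Lemma \ref{lemma:partialsS}, where $S_1=-R_1$ and $S_2=R_2$. An alternative formulation is to write $F$ in support coordinates, $F=\sum_j p_j\bigl(\cot\frac{\omega_{j-1}}{2}+\cot\frac{\omega_j}{2}\bigr)$-type, differentiate it, and observe that $\xi_i=\partial_{\alpha_i}+h_i\partial_{p_i}$, where $h_i$ is the signed position of $C_i$ along $L_i$.

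The main obstacle is the sign bookkeeping. The circle lies on the negative side of $L_{i\pm1}$ and the positive side of $L_i$, so it is an excircle-type circle, and one must check that the two vertex contributions carry opposite signs with equal magnitude $r\dot\phi$ rather than adding. I would fix this by a single configuration check, for instance with a symmetric configuration, and then extend it by continuity, since the signed formula is algebraic in the support coordinates.
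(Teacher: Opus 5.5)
Your argument is correct, but it takes a different route from the paper's. The paper argues by extremality rather than by computing the derivative. Using equal tangent lengths, it writes the sum of the three affected sides, for every position of $L_i$ through $C_i$, as $|V_{i-\frac32}W_i|+|V_{i+\frac32}W_i|-2|W_iX_i|$, where $W_i=L_{i-1}\cap L_{i+1}$ and $X_i$ is the tangency point with $L_{i-1}$. So $F$ is stationary exactly when the tangent length $|W_iX_i|$ is. By the classical problem of the minimal-perimeter triangle cut from an angle by a chord through a given point, this happens when the circle inscribed in the angle at $W_i$ passes through $C_i$.

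You instead compute the first variation vertex by vertex and get contributions $+r\dot\phi$ and $-r\dot\phi$. Your computation is local and uniform. It needs no point $W_i$, so the cases where $W_i$ lies across $L_i$ or $L_{i-1}\parallel L_{i+1}$, which the paper dismisses as ``similar'', require no separate treatment. It also gives more. Rotating $L_i$ about a point of the side at distances $d_1,d_2$ from $V_{i-\frac12},V_{i+\frac12}$ changes $F$ at a rate proportional to $d_1\tan\beta_{i-\frac12}-d_2\tan\beta_{i+\frac12}$. Hence $C_i$ is the unique perimeter-preserving center of rotation, the polygonal counterpart of $S_1=-R_1$, $S_2=R_2$ in Lemma \ref{lemma:partialsS}. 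The paper's version, in exchange, explains geometrically why the circle appears, and the same argument is reused in the first proof of Theorem \ref{thm:Iv}.

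Two remarks on details.

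\textbf{Signs.} The sign bookkeeping needs no continuity argument. For a convex polygon the center of the circle lies on the bisectors of the exterior angles $2\beta_{i\pm\frac12}<\pi$. Hence $C_i$ is interior to the side, and the tangency points with $L_{i-1},L_{i+1}$ lie on their extensions beyond $V_{i-\frac12},V_{i+\frac12}$. A vertex moving outward by $u$ therefore lengthens its adjacent side by $u$ and changes its distance to $C_i$ by $-u\cos 2\beta$. The net change is $u(1-\cos2\beta)=\dot\phi\, d\tan\beta=\dot\phi\, r$. The other vertex moves inward and contributes $-\dot\phi\, r$ in the same way.

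\textbf{The discarded aside.} As written it is wrong. The tangency point of $\Omega_t$ with the rotating line moves at first order, with speed $r\dot\phi$ along the line. What is stationary to first order is the circle itself, hence its tangency points $X_i,Y_i$ with the fixed lines. That corrected statement, combined with $|V_{i-\frac32}V_{i-\frac12}|+|V_{i-\frac12}V_{i+\frac12}|+|V_{i+\frac12}V_{i+\frac32}|=|V_{i-\frac32}X_i|+|V_{i+\frac32}Y_i|$, is exactly the paper's proof.
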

 
 \proof The argument is essentially the proof of Lemma 3.1 in \cite{AT}.  
 
 Consider Figure \ref{extrem}. Fix point $C_i$ and let 
 the side $L_i$, passing through $C_i$, be variable. We claim that the perimeter of the polygon is extremal when $Z_i=C_i$,
that is, the circle passes through point $C_I$.
 
 \begin{figure}[ht]
\centering
\includegraphics[width=4in]{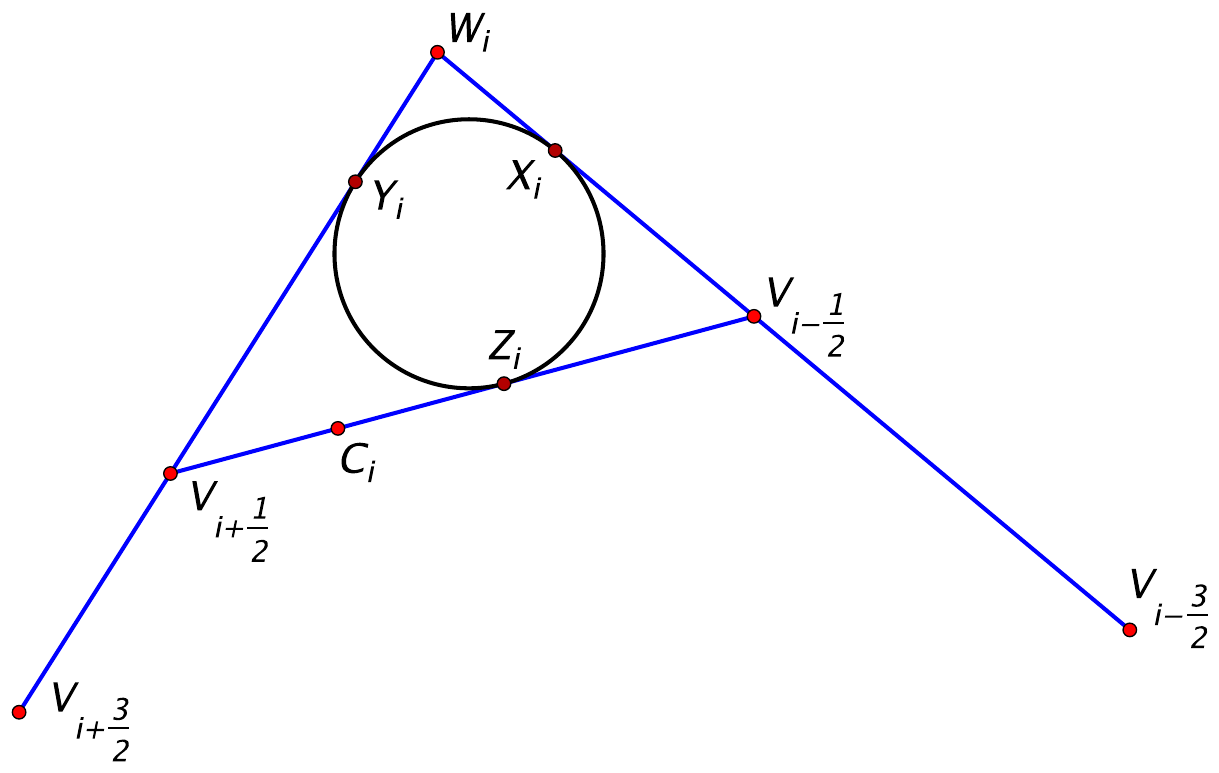}
\caption{Proof of Lemma \ref{lm:per}. If point $W_i$ lies on the other side of the line $L_i$ or the sides $L_{i-1}$ and $L_{i+1}$ are parallel, the argument is similar. }
\label{extrem}
\end{figure}
 Indeed, the two tangent segments to a circle from a point have equal lengths. Hence 
 $  |V_{i-\frac12} Z_i|=|V_{i-\frac12} X_i|, |V_{i+\frac12} Z_i|=|V_{i+\frac12} Y_i|$, and 
 $$
 \begin{aligned}
 |V_{i-\frac32} V_{i-\frac12}|+|V_{i-\frac12} V_{i+\frac12}|+|V_{i+\frac12} V_{i+\frac32}|&=|V_{i-\frac32} X_i|+|V_{i+\frac32} Y_i|\\
 &=|V_{i-\frac32} W_i |+|V_{i+\frac32}W_i|-2|W_iX_i|.
 \end{aligned}
 $$
 It follows that the perimeter is extremal when so is the tangent segment $W_iX_i$, and this happens when the circle passes through point $C_i$.
 
 Returning to Figure \ref{defd}, it follows that one has, for the directional derivative,  $D_{\xi_i} (F)=0$, as needed.
 \proofend
 
In view of the above lemma, we restrict attention to polygons with a fixed perimeter, say, unit perimeter. Let $\C_n$ be the set of such convex $n$-gons. Then dim $\C_n = 2n-1$ and $\D_n$ is an $n$-dimensional distribution on $\C_n$.
Here is the main result of this section.

\begin{theorem} \label{thm:nper}
For every $n\ge 3$, there exists a functional space of  outer length billiard curves, sufficiently close to a circular one and possessing invariant curves consisting of $n$-periodic orbits.
\end{theorem}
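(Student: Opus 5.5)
The plan follows the sub-Riemannian approach of \cite{BZ,La,GT}. The key observation is that a one-parameter family of $n$-periodic orbits whose sides envelope an oval is the same thing as a closed curve in $\C_n$ that is tangent to $\D_n$ and satisfies a suitable closure/convexity condition. More precisely, let $t\mapsto P(t)\in\C_n$ be a closed horizontal curve, i.e. $P'(t)\in\D_n$ for all $t$. Suppose $P(t)$ passes from the polygon $P$ to its cyclic relabeling $\sigma P$, where $\sigma$ shifts the indices by one. Suppose also that each side rotates monotonically, so that the velocity is $\sum c_i\xi_i$ with all $c_i>0$. Then the side $L_i$ moves by infinitesimal rotation about $C_i$. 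Hence the lines $L_i(t)$ envelope a curve through the points $C_i(t)$, and concatenating the envelopes of $L_1,\dots,L_n$ gives a closed smooth curve $\g$. By construction, each circle of Figure~\ref{defd} is tangent to $L_{i-1},L_{i+1}$ and touches $L_i$ at its tangency point with $\g$. This is exactly the outer length billiard rule. So every polygon $P(t)$ is an $n$-periodic orbit of the billiard in $\g$, and the vertices trace an invariant curve consisting of $n$-periodic points.

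The first step is to check that $\D_n$ is bracket generating on $\C_n$ near the regular $n$-gon $P_0$ circumscribed about the unit circle, or at least that $\D_n$ together with one layer of brackets spans $T\C_n$. Near $P_0$ the circles of the construction are all close to one another. I would compute the brackets $[\xi_i,\xi_{i+1}]$ in the support coordinates $(\alpha_i,p_i)$, using the explicit formula for $C_i$ in terms of $p_{i-1},p_i,p_{i+1}$ and the angles. The $\xi_i$ are $n$ independent fields, and $\dim \C_n-n=n-1$. So it suffices that the $n$ brackets $[\xi_i,\xi_{i+1}]$ are transversal to $\D_n$ and span the missing $n-1$ directions. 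At $P_0$ this is a circulant linear algebra computation, which is tractable thanks to the $\Z_n$-symmetry.

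Given this, the Chow–Rashevskii theorem applies. Better, I would use the quantitative version that gives smooth horizontal curves with prescribed endpoints depending smoothly on a functional parameter, as in \cite{BZ,GT}. The circle gives a trivial example: rotating $P_0$ by angle $t$ is horizontal, since $C_i$ is the tangency point with the unit circle, and it connects $P_0$ to $\sigma P_0$ at $t=2\pi/n$. The family of horizontal curves from $P_0$ to $\sigma P_0$ is infinite dimensional. I would perturb the first $n$-th of the rotation by taking controls $c_i(t)=1+\varepsilon f_i(t)$ with arbitrary smooth $f_i$. I would then correct the endpoint to land exactly on $\sigma$ of the starting polygon using finitely many extra control parameters. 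The endpoint map is a submersion onto the $(2n-1)$-dimensional manifold by the bracket generating property, so the implicit function theorem handles this correction. Finally, I would close the curve by applying $\sigma$ repeatedly, with suitable matching of derivatives at the junctions so that $\g$ is smooth. Positivity of the controls $c_i$ and strict convexity of the resulting envelope hold for small $\varepsilon$ by closeness to the circle.

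The main obstacle is the first step: verifying the transversality of the brackets and the surjectivity of the endpoint map restricted to curves ending at the cyclic shift. If the one-step bracket computation at $P_0$ degenerates, which is plausible because of the circular symmetry, I would instead use a direct count. I would linearize the endpoint condition around the rotation curve and show that the linearized endpoint map, acting on perturbations $f_i$, has rank $2n-1$ modulo the $\Z_n$-symmetric trivial directions. The smoothness of $\g$ at the junction points between consecutive envelopes must also be checked. I would try to obtain it by requiring the $f_i$ to be periodic of period $2\pi/n$ compatibly with the shift.
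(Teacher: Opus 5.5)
\textbf{Verdict: genuine gap.} Your overall architecture agrees with the paper's: the rotation curve $\g_0$ from $L(0)$ to $\sigma L(0)$ in $\C_n$, the growth-type computation at the regular $n$-gon, perturbing the controls, closing up with $\sigma$, and reading off the oval as the envelope. But the step that carries the whole argument is justified incorrectly.

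You claim the endpoint map is a submersion ``by the bracket generating property''. Bracket generation (Chow--Rashevskii) only guarantees that points can be joined by horizontal curves. It says nothing about the differential of the endpoint map at a particular curve. That differential fails to be onto exactly at singular (abnormal) horizontal curves, and these exist for bracket-generating distributions, even of step two. In the $(3,5)$ example of the paper's last remark, the $x$-axis is singular: the transported vectors $\partial x,\partial y,\partial z+t\,\partial u$ never produce $\partial v$.

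So until you know $\g_0$ is regular, neither the implicit-function correction by finitely many extra controls nor a ``quantitative Chow theorem'' with smooth dependence on a functional parameter is available. For the Martinet singular curve, the fixed-endpoint perturbations are only reparametrizations. The linearization you mention in your last paragraph is the right tool. However, it is not a fallback for a possibly degenerate bracket computation; it is the required step, and it tests a different condition from $\D_n+[\D_n,\D_n]=T\C_n$.

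\textbf{What must be shown.} You need $\D_n+[X,\D_n]=T\C_n$ along $\g_0$, where $X=\sum_i\xi_i$ generates $\g_0$. The image of the differential of the endpoint map contains the transported planes $\D_n$ and, by differentiating in $t$, the brackets $[X,\xi_j]$. Along $\g_0$ one has:
\begin{itemize}
\item $\Phi_i=0$ and $\xi_i=\partial\alpha_i$;
\item $[\xi_i,\xi_{i+1}]=c\,(\partial p_{i+1}-\partial p_i)$ with $c=1/(2\cos^2(\pi/n))$;
\item $[\xi_i,\xi_j]=0$ for $|i-j|\ge 2$.
\end{itemize}
Hence $[X,\xi_j]=c\,(2\partial p_j-\partial p_{j-1}-\partial p_{j+1})$. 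These discrete Laplacians span the $(n-1)$-dimensional space $\{\sum a_j\partial p_j:\ \sum a_j=0\}$. This space complements $\D_n$ in $T\C_n$, because $dF$ is a multiple of $\sum dp_j$ there.

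The paper proves the same thing via Hsu's criterion. Take $\theta_i=dp_i-\Phi_i\,d\alpha_i$ and impose $\sum\lambda_i=0$ to factor out the relation $\sum\theta_i=0$ along $\g_0$. Then the condition $\omega(\lambda)(\dot\g_0,\xi_j)=0$ for all $j$ reads $\lambda_{j-1}-2\lambda_j+\lambda_{j+1}=0$, which forces $\lambda=0$. Hence $\g_0$ is non-singular, and near $\g_0$ the horizontal curves from $L(0)$ to $L(1)$ form a Hilbert submanifold of codimension $2n-1$. With this step in place, the rest of your argument goes through.
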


Here is a description, in broad strokes, of the proof.  We construct outer length billiard curves comprising $n$-periodic orbits by perturbing a circle (which obviously has the said  property). 
The circular $n$-periodic invariant curve of the outer length billiard is a closed horizontal curve $\g_0$ in the space $\C_n$. One has the action of the  group $\Z_n$ on $\C_n$ that cyclically permutes the sides of $n$-gons and that preserves the distribution $\D_n$, and $\g_0$ is invariant under this cyclic permutation. We show that $\g_0$ admits a functional space of perturbations in the class of horizontal closed $\Z_n$-invariant curves in $\C_n$. The sides of the respective triangles then envelop the desired outer length billiard curves. We refer to \cite{Mon,Mon2} for the underlining theory.

As a preparation to the proof, we make some computations.

Let $X=(a,b)$ be Cartesian coordinates of a point, and let $x\cos\alpha+y\sin\alpha=p$ be the equation of a line through   point $X$. The counterclockwise infinitesimal rotation of the line about point $X$ is a tangent vector $\xi$ to the space of lines.

\begin{lemma} \label{lm:rot}
One has 
$$
\xi= \partial \alpha + (b\cos\alpha - a\sin\alpha)\ \partial p = \partial \alpha + [(\cos\alpha,\sin\alpha),(a,b)]\ \partial p,
$$
where the bracket denotes the determinant of a pair of vectors.
\end{lemma}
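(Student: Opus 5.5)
The plan is to parametrize the one-parameter family of lines through the fixed point $X=(a,b)$ by the angle $\alpha$ of their coorienting normal, and then differentiate. A line with normal $(\cos\alpha,\sin\alpha)$ passes through $X$ exactly when $p = a\cos\alpha + b\sin\alpha$. So the pencil of lines through $X$ is the curve
$$\alpha \mapsto \bigl(\alpha,\ p(\alpha)\bigr) = \bigl(\alpha,\ a\cos\alpha+b\sin\alpha\bigr)$$
in the $(\alpha,p)$ coordinates on the space of lines. Rotating the line about $X$ counterclockwise increases the normal direction $\alpha$. Hence the infinitesimal counterclockwise rotation, normalized to unit angular speed, is the velocity vector of this curve:
$$\xi=\partial\alpha+\frac{dp}{d\alpha}\,\partial p .$$

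The next step is the computation
$$\frac{dp}{d\alpha}=-a\sin\alpha+b\cos\alpha,$$
which gives the first formula. For the second formula, observe that
$$\det\begin{pmatrix}\cos\alpha & a\\ \sin\alpha & b\end{pmatrix}=b\cos\alpha-a\sin\alpha,$$
and this equals the bracket $[(\cos\alpha,\sin\alpha),(a,b)]$.

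The only point needing care is the sign and orientation convention. Counterclockwise rotation of the line corresponds to counterclockwise rotation of its normal, i.e.\ to increasing $\alpha$, so the $\partial\alpha$ coefficient is $+1$. We should also confirm that the point of tangency really is $X$. Differentiating the family $x\cos\alpha+y\sin\alpha=p(\alpha)$ with respect to $\alpha$ gives $-x\sin\alpha+y\cos\alpha=p'(\alpha)$. Together with the line equation, this yields the envelope formula (\ref{eq:envelope}) with $p$ in place of the support function. Substituting $p(\alpha)=a\cos\alpha+b\sin\alpha$ returns the point $(a,b)$, so the rotation is indeed about $X$.
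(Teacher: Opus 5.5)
Your proof is correct and matches the paper's argument: the paper also writes the pencil of lines through $X$ as $p(\alpha)=a\cos\alpha+b\sin\alpha$ and differentiates in $\alpha$. Your extra checks of the orientation convention and of the envelope point are valid but not needed, since this family is the pencil through $X$ by construction.
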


\proof
The equation of the lines through point $X$ is $p(\alpha)=a\cos\alpha+b\sin\alpha$. Differentiating with respect to $\alpha$ yields the result.
\proofend

Let $2\beta_{i-\frac12}:=\alpha_i-\alpha_{i-1}$ be  the exterior angle at vertex $V_{i-\frac12}$. Let us calculate the coordinates of the vertices $V_j$ (where $j$ is a half-integer) and the infinitesimal rotations of the $i$th sides about points $C_i$.

\begin{lemma} \label{lm:coord}
One has:
$$
V_{i-\frac12}=\frac{(p_{i-1} \sin \alpha_i-p_i \sin \alpha_{i-1},
p_{i} \cos \alpha_{i-1}-p_{i-1} \cos \alpha_i)}{\sin (\alpha_i-\alpha_{i-1})},
$$
and 
\begin{equation} \label{eq:cr}
[(\cos\alpha_i,\sin\alpha_i),C_i] = 
\frac{\cos^2 \left(\frac{\alpha_i - \alpha_{i-1}}{2}\right) (p_{i+1}+p_i) - \cos^2 \left(\frac{\alpha_{i+1} - \alpha_{i}}{2}\right) (p_{i-1}+p_i) }
{2\sin \left(\frac{\alpha_{i+1}-\alpha_{i-1}}{2}\right) \cos \left(\frac{\alpha_i - \alpha_{i-1}}{2}\right) \cos \left(\frac{\alpha_{i+1} - \alpha_{i}}{2}\right)}.
\end{equation}
\end{lemma}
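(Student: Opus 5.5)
The first formula is obtained exactly as the vertex $M$ in the proof of Lemma \ref{lemma:lambdas}. The vertex $V_{i-\frac12}$ is the intersection of $L_{i-1}$ and $L_i$, so its coordinates solve the linear system $x\cos\alpha_{i-1}+y\sin\alpha_{i-1}=p_{i-1}$, $x\cos\alpha_i+y\sin\alpha_i=p_i$. Cramer's rule gives the stated expression, with determinant $\sin(\alpha_i-\alpha_{i-1})$.

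For (\ref{eq:cr}) I will not compute $C_i$ itself but only the quantity required, using adapted coordinates. Write $e_i=(\cos\alpha_i,\sin\alpha_i)$ and $e_i^\perp=(-\sin\alpha_i,\cos\alpha_i)$. Let $O$ be the center and $r>0$ the radius of the circle in the definition of $\D_n$.
\begin{itemize}
\item The side conditions translate into three linear equations:
$$\langle O,e_{i\pm1}\rangle=p_{i\pm1}-r,\qquad \langle O,e_i\rangle=p_i+r.$$
The circle lies on the negative side of $L_{i\pm1}$ and on the positive side of $L_i$.
\item The tangency point is $C_i=O-re_i$. Since $[e_i,e_i]=0$, we get $[e_i,C_i]=[e_i,O]=\langle O,e_i^\perp\rangle=:t$.
\item It therefore suffices to write $O=(p_i+r)e_i+t\,e_i^\perp$ and to solve for $t$.
\end{itemize}

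Set $u=\alpha_{i+1}-\alpha_i$ and $v=\alpha_i-\alpha_{i-1}$. We have $\langle e_i,e_{i+1}\rangle=\cos u$ and $\langle e_i^\perp,e_{i+1}\rangle=\sin u$, and similarly $\langle e_i,e_{i-1}\rangle=\cos v$ and $\langle e_i^\perp,e_{i-1}\rangle=-\sin v$. Substituting $O$ into the two remaining equations gives
$$r(1+\cos u)+t\sin u=p_{i+1}-p_i\cos u,\qquad r(1+\cos v)-t\sin v=p_{i-1}-p_i\cos v.$$

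Next divide the first equation by $1+\cos u=2\cos^2(u/2)$ and the second by $2\cos^2(v/2)$. Rewrite each right-hand side using $p_{i\pm1}-p_i\cos w=(p_{i\pm1}+p_i)-2p_i\cos^2(w/2)$. This yields
$$r+t\tan\tfrac u2=\frac{p_{i+1}+p_i}{2\cos^2(u/2)}-p_i,\qquad r-t\tan\tfrac v2=\frac{p_{i-1}+p_i}{2\cos^2(v/2)}-p_i.$$

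Subtracting eliminates both $r$ and the $p_i$ terms. We then use
$$\tan\tfrac u2+\tan\tfrac v2=\frac{\sin\frac{u+v}{2}}{\cos\frac u2\cos\frac v2},\qquad u+v=\alpha_{i+1}-\alpha_{i-1}.$$
Solving for $t$ and multiplying numerator and denominator by $\cos^2(u/2)\cos^2(v/2)$ gives exactly (\ref{eq:cr}).

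The only real obstacle is bookkeeping of signs. One must make sure that the side conventions (which of the four tritangent circles is meant) correspond to the signs $\mp r$ above. One must also check that the orientation of $e_i^\perp$ matches the determinant convention $[e_i,X]=b\cos\alpha_i-a\sin\alpha_i$ of Lemma \ref{lm:rot}. I would sanity-check the result on a regular polygon centered at the origin. There all $p_j$ are equal and the angles are equally spaced, so the formula must give $t=0$, i.e.\ $C_i$ is the foot of the perpendicular, which it does.
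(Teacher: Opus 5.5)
Your proof is correct, but the second part takes a different route from the paper. The sign conventions $\langle O,e_{i\pm1}\rangle=p_{i\pm1}-r$ and $\langle O,e_i\rangle=p_i+r$ encode exactly the paper's choice among the four tritangent circles. The identity $[e_i,O]=\langle O,e_i^\perp\rangle$ matches the convention of Lemma \ref{lm:rot}, and the resulting $2\times2$ system does reduce to (\ref{eq:cr}).

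The first part is the same Cramer's-rule computation as in the paper. For (\ref{eq:cr}) the two arguments differ as follows.

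\textbf{The paper's route.} It locates $C_i$ explicitly. The tangent lengths from the two vertices are $r_i\cot\beta_{i\mp\frac12}$, so $C_i$ is the weighted average $(\cot\beta_{i-\frac12}V_{i+\frac12}+\cot\beta_{i+\frac12}V_{i-\frac12})/(\cot\beta_{i-\frac12}+\cot\beta_{i+\frac12})$. It then substitutes the vertex coordinates from the first part and simplifies, leaving the trigonometry to the reader.

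\textbf{Your route.} You never compute $C_i$ or the vertices. In the frame $(e_i,e_i^\perp)$ adapted to $L_i$, the desired bracket appears directly as the coordinate $t$ of the center. The tangency conditions become a linear system in $(r,t)$, and one subtraction eliminates both $r$ and the $p_i$ terms.

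\textbf{What each buys.} Your version is shorter and fully explicit. It also needs no separate geometric input about where $C_i$ sits on the segment $V_{i-\frac12}V_{i+\frac12}$, because the signed distances handle that automatically. The paper's version, in exchange, produces the point $C_i$ itself. It also gives an intermediate expression in the half-angles $\beta_{i\pm\frac12}$, which is the form used again later, e.g. in (\ref{eq:Phitan}). Finally, it ties the two parts of the lemma together.
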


\proof
Point $V_{i-\frac12}$ is the intersection of the lines given by the equations $x\cos\alpha_{i-1}+y\sin\alpha_{i-1}=p_{i-1}$ and $x\cos\alpha_i+y\sin\alpha_i=p_i$. Solving this system of linear equations for $x,y$ and using some trigonometry yields the result (see  the proof of Lemma \ref{lemma:lambdas}).

Let $r_i$ be the radius of the circle in Figure \ref{defd}. Then
$$
|V_{i-\frac12} C_i|=r_i \cot \beta_{i-\frac12},\ |V_{i+\frac12} C_i|=r_i \cot \beta_{i+\frac12},
$$
hence 
$$
C_i = \frac{\cot \beta_{i-\frac12} V_{i+\frac12} + \cot \beta_{i+\frac12} V_{i-\frac12}}{\cot \beta_{i-\frac12}+\cot \beta_{i+\frac12}}.
$$

Next, using the obtained formulas for $C_i$ and $V_{i\pm\frac12}$, we calculate:
$$
[(\cos\alpha_i,\sin\alpha_i),C_i] = \frac{(\cos^2 \beta_{i-\frac12}) p_{i+1} - (\cos^2 \beta_{i+\frac12}) p_{i-1} - (\cos^2 \beta_{i+\frac12}-\cos^2 \beta_{i-\frac12}) p_i}
{2\sin(\beta_{i-\frac12}+\beta_{i+\frac12})\cos \beta_{i-\frac12} \cos \beta_{i+\frac12}}.
$$
This is equivalent to formula (\ref{eq:cr}). 
\proofend 

\begin{remark} \label{rmk:per}
{\rm The following formulas are worth mentioning: the length of the $i$th side of the polygon equals
$$
\frac{p_{i-1}}{\sin(\alpha_i-\alpha_{i-1})} + \frac{p_{i+1}}{\sin(\alpha_{i+1}-\alpha_{i})} - \frac{p_i \sin(\alpha_{i+1}-\alpha_{i-1})}{\sin(\alpha_i-\alpha_{i-1})\sin(\alpha_{i+1}-\alpha_{i})},
$$
and the perimeter $F$ is given by 
$$
\sum p_i \left[\tan\left(\frac{\alpha_{i+1}-\alpha_i}{2}\right) +  \tan\left(\frac{\alpha_{i}-\alpha_{i-1}}{2}\right) \right] =
\sum (p_i+p_{i+1}) \tan\left(\frac{\alpha_{i+1}-\alpha_i}{2}\right).
$$
One can directly verify that the directional derivatives of the perimeter with respect to the vectors $\xi_i$ vanish, that is, to reprove Lemma \ref{lm:per} analytically.
}
\end{remark}

Denote by $\Phi_i$ the right hand side expression in (\ref{eq:cr}). Then the distribution $\D_n$ is generated by the vector fields $\xi_i = \partial \alpha_i + \Phi_i \partial p_i$. Since $\Phi_i$ depends only on the variables with the indices $i-1,i,i+1$, one has  $[\xi_i,\xi_j]=0$ for $|i-j| \ge 2$.

\begin{proposition} \label{prop:nonint}
In a sufficiently small neighborhood of the curve $\g_0$, the distribution $\D_n$ is completely non-integrable. More specifically, it has the growth type $(n,2n-1)$, that is, the tangent space $T \C_n$ is spanned by the vector fields $\xi_i$ and their first commutators.
\end{proposition}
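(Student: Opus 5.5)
Since the distribution $\D_n$ is spanned by $\xi_i=\partial\alpha_i+\Phi_i\,\partial p_i$ and $[\xi_i,\xi_j]=0$ for $|i-j|\ge 2$, the only possibly nonzero first commutators are $\eta_i:=[\xi_i,\xi_{i+1}]$, $i=1,\dots,n$ (indices mod $n$). A direct computation gives
$$
\eta_i=[\xi_i,\xi_{i+1}] = \xi_i(\Phi_{i+1})\,\partial p_{i+1}-\xi_{i+1}(\Phi_i)\,\partial p_i ,
$$
because the $\partial\alpha$ parts have constant coefficients. So the $\eta_i$ are vertical, lying in $\mathrm{span}\{\partial p_1,\dots,\partial p_n\}$. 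The plan is to show that, on the circular curve $\g_0$, the $n$ vectors $\eta_i$ span an $(n-1)$-dimensional subspace of this vertical space. This subspace must be transverse to $\D_n$ inside $T\C_n$, so together with the $\xi_i$ the total dimension is $n+(n-1)=2n-1$, as claimed. Since linear independence is an open condition, it then persists in a small neighborhood of $\g_0$.

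Note that all brackets are tangent to $\C_n$ automatically: the $\xi_i$ are tangent to level sets of $F$ by Lemma \ref{lm:per}, hence so are their brackets. Thus the $\eta_i$ lie in the $(n-1)$-dimensional subspace of $\mathrm{span}\{\partial p_j\}$ annihilated by $dF$. By Remark \ref{rmk:per}, restricted to vertical vectors $dF$ is $\sum_i c_i\,dp_i$ with $c_i=\tan\beta_{i-\frac12}+\tan\beta_{i+\frac12}>0$. Also, a vertical vector cannot lie in $\D_n$, since every nonzero element of $\D_n$ has a nonzero $\partial\alpha$ component. Hence it suffices to show that the $n\times n$ matrix of coefficients of $\eta_1,\dots,\eta_n$ in the basis $\partial p_i$ has rank $n-1$ at points of $\g_0$.

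The computation is done at the regular $n$-gon circumscribed about the unit circle centered at the origin: $p_i=1$, $\alpha_i=\alpha_0+2\pi i/n$, $\beta=\pi/n$. There, for the needed derivatives of $\Phi_i$ from \eqref{eq:cr}:
\begin{itemize}
\item the $p$-derivatives are $\partial\Phi_i/\partial p_{i\pm1}=\pm\frac{1}{2\sin 2\beta}$, a constant sign pattern;
\item the $\alpha$-derivatives can be computed as well, noting that $\Phi_i=0$ at the regular polygon since then $C_i$ is the foot of the perpendicular from the origin.
\end{itemize}
Let $a:=\xi_i(\Phi_{i+1})$ and $b:=\xi_{i+1}(\Phi_i)$, which by the $\Z_n$-symmetry are independent of $i$, so that $\eta_i=a\,\partial p_{i+1}-b\,\partial p_i$. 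The coefficient matrix is then circulant, with eigenvalues $a\,\zeta-b$ for $\zeta^n=1$. Its rank is $n-1$ precisely when $a=b\ne 0$: the kernel is then the $\zeta=1$ mode, matching the relation $\sum\eta_i=0$ forced by $dF$, since all $c_i$ are equal. The case $a=b$ is expected by the $dF$-constraint, because the $\eta_i$ must satisfy $c\sum(\text{coefficients})=0$.

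The main obstacle is verifying that $a\neq 0$. The term $\partial_{\alpha_i}\Phi_{i+1}+\Phi_i\,\partial_{p_i}\Phi_{i+1}$ at the regular polygon reduces to $\partial_{\alpha_i}\Phi_{i+1}$. One would compute it directly from \eqref{eq:cr}, or more cleanly by geometry: rotate $L_i$ about its tangency point and see how the point $C_{i+1}$ moves along $L_{i+1}$. This gives a nonzero multiple of $1/\sin 2\beta$, which is finite and nonzero for $n\ge3$. Should $a$ vanish at the circle, the fallback would be to use second commutators, but I expect a direct trigonometric evaluation to give something like $a=\frac{1}{4\cos^2\beta}\neq0$.
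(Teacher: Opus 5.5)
Your reduction matches the paper's proof. The brackets $[\xi_i,\xi_{i+1}]$ are vertical. On $\g_0$ one has $\Phi_i=0$, so they reduce to $\partial_{\alpha_i}\Phi_{i+1}\,\partial p_{i+1}-\partial_{\alpha_{i+1}}\Phi_i\,\partial p_i$. One then needs these $n$ vertical vectors to span an $(n-1)$-dimensional space. Your circulant bookkeeping is sound, as is the use of $dF$ to force $a=b$ and the observation that vertical vectors are transverse to $\D_n$.

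\textbf{The gap.} The proof stops exactly where the content of the proposition lies: you never establish $a\neq 0$. You only say you \emph{expect} it, and your guessed value $\frac{1}{4\cos^2\beta}$ is off by a factor of $2$. Your fallback, passing to second commutators, would not prove the statement. The claimed growth type $(n,2n-1)$ is precisely the assertion that first commutators already span $T\C_n$. Everything else in your argument is formal, so without the value of $a$ there is no proof.

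\textbf{How to fill it.} The $\alpha_j$ and $p_j$ are independent coordinates on $\P_n$, so you may set all $p_j=1$ in (\ref{eq:cr}) before differentiating in $\alpha_i$. Using $\cos^2A-\cos^2B=\sin(A+B)\sin(B-A)$, formula (\ref{eq:cr}) collapses to $\Phi_{i+1}=\tan\beta_{i+\frac32}-\tan\beta_{i+\frac12}$, where $\beta_{i+\frac12}=\frac{\alpha_{i+1}-\alpha_i}{2}$. Since only $\beta_{i+\frac12}$ involves $\alpha_i$, this gives $a=\partial_{\alpha_i}\Phi_{i+1}=\frac{1}{2\cos^2\beta_{i+\frac12}}=\frac{1}{2\cos^2(\pi/n)}>0$ at the regular polygon. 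In the same way, $\Phi_i=\tan\beta_{i+\frac12}-\tan\beta_{i-\frac12}$ gives $b=\partial_{\alpha_{i+1}}\Phi_i=\frac{1}{2\cos^2(\pi/n)}$, which confirms $a=b$ directly.

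Hence along $\g_0$ one has $[\xi_i,\xi_{i+1}]=\frac{1}{2\cos^2(\pi/n)}(\partial p_{i+1}-\partial p_i)$, and your circulant matrix has rank exactly $n-1$. With this computation inserted, the rest of your argument goes through, including openness in a neighborhood of the compact curve $\g_0$, and it then coincides with the paper's proof.
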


\proof One has
$$
[\xi_i,\xi_{i+1}]= \left(\frac{\partial\Phi_{i+1}}{\partial \alpha_i} +\Phi_i \frac{\partial\Phi_{i+1}}{\partial p_i}\right) \partial p_{i+1}
- \left(\frac{\partial\Phi_{i}}{\partial \alpha_{i+1}} +\Phi_{i+1} \frac{\partial\Phi_{i}}{\partial p_{i+1}}\right) \partial p_{i},
$$
where the indices are understood cyclically. Let us evaluate these brackets along the curve $\g_0$.

Choose the origin at the center of the circle; then, after rescaling, one has $p_i=1$ and $\alpha_{i+1}-\alpha_i= \frac{2\pi}{n}$ for all $i$. It follows that $\Phi_i=0$ for all $i$, hence 
$$
[\xi_i,\xi_{i+1}]=\frac{\partial\Phi_{i+1}}{\partial \alpha_i} \partial p_{i+1} - \frac{\partial\Phi_{i}}{\partial \alpha_{i+1}} \partial p_{i}.
$$
along $\g_0$.

Next, the operation of setting $p_j=1$ for all $j$ and differentiating with respect to $\alpha_i$ commute. Setting all $p_j=1$  and using some trigonometry, one has 
\begin{equation} \label{eq:Phitan}
\begin{aligned}
\Phi_{i+1} &= \frac{\cos^2 \beta_{i+\frac12}-\cos^2 \beta_{i+\frac32}}{\sin(\beta_{i+\frac12}+\beta_{i+\frac32}) \cos \beta_{i+\frac12} \cos \beta_{i+\frac32}}
= \frac{\sin\left(\beta_{i+\frac32}-\beta_{i+\frac12}\right)}{\cos \beta_{i+\frac12} \cos \beta_{i+\frac32}}\\ 
&= \tan\left(\frac{\alpha_{i+2}-\alpha_{i+1}}{2}\right) - \tan\left(\frac{\alpha_{i+1}-\alpha_{i}}{2}\right),
\end{aligned}
\end{equation}
hence, along $\g_0$, 
$$
\frac{\partial\Phi_{i+1}}{\partial \alpha_i} = \frac{1}{2\cos^2 \beta_{i+\frac12}} = \frac{1}{2 \cos^2 \frac{\pi}{n}}>0. 
$$
Likewise, $\frac{\partial\Phi_{i}}{\partial \alpha_{i+1}} > 0.$

Let $\theta_i= dp_i-\Phi_i d\alpha_i,\ i=1,\ldots,n$, be the 1-forms that span the conormal bundle $\D^\perp$ of the distribution $\D$. It follows that, along $\g_0$, the matrix $\theta_j([\xi_i,\xi_{i+1}]), j=1,\ldots,n-1, i=1,\ldots,n$, has rank $n-1$. Therefore this rank equals $n-1$ is a sufficiently small neighborhood of $\g_0$ as well, that is, $T\C_n=\D+[\D,\D]$, as claimed.
\proofend

\begin{remark} 
{\rm We believe that $\D_n$ has the growth type $(n,2n-1)$ everywhere in $\C_n$, and we checked it for $n=3$, but, at the moment, do not have a  proof of this conjecture.
}
\end{remark}

\noindent {\bf Proof of Theorem \ref{thm:nper}.}
A circular outer length billiard yields a 1-parameter family of periodic $n$-gons $L(t)=(L_1(t),\ldots,L_n(t)),\  t\in [0,1]$, circumscribed about the circular table (we continue to think of a polygon as a tuple of its sides). Let $\sigma$ be the cyclic perturbation of the sides; then $L(1)=\sigma(L(0))$. This family of $n$-gons is the initial horizontal curve $\gamma_0: [0,1] \to \C_n$, given, after rescaling, by
$$
 \alpha_i(t)=\frac{2\pi}{n}(t+(i-1)),\ p_i(t)=1,\ i=1,\ldots,n.
$$

Denote by $H^2_{\D_n}$ the space of horizontal curves $\g: [0,1]\to \C_n$ whose first two derivatives are square integrable, and let $H^2_{\D_n}(c)$ denote the subspace of paths that start at $c \in \C_n$. Then $H^2_{\D_n}(c)$ is a Hilbert manifold whose local coordinate charts are as follows.

We have the  horizontal vector fields $\xi_i, i=1,\ldots,n$, that span $\D_n$. If $\g(t)$ is a horizontal curve then $\gamma'(t)=\sum_{i=1}^n f_i(t)\xi_i(\g(t))$, where $f_i:[0,1]\to \R$ are square integrable functions. These functions are the coordinates of the curve $\g(t)$. Thus the functional dimension of $H^2_{\D_n}(c)$ is $n$, and the space of non-parameterized oriented horizontal curves with a fixed starting point has functional dimension $n-1$.

Denote by $H^2_{\D_n}(c_0,c_1)\subset H^2_{\D_n}$ the space of the horizontal curves with the starting point $c_0\in\C_n$ and the terminal point $c_1\in\C_n$. The  circular outer length billiard table yields the above mentioned horizontal curve $\g_0 \in H^2_{\D_n}(L(0),L(1))$. We want to perturb this curve within the subspace $H^2_{\D_n}(L(0),L(1))$.

We  show that $H^2_{\D_n}(L(0),L(1)) \subset H^2_{\D_n}(L(0))$ is a Hilbert submanifold of codimension $2n-1$. To this end, let $\pi: H^2_{\D_n}(L(0)) \to \C_n$ be the  projection that assigns to a path $\g$ its endpoint $\g(1)$. If this smooth map is a submersion, then the preimage $\pi^{-1}(L(1))$ is a Hilbert  submanifold of codimension $2n-1$. 

However $\pi$ may fail to be a submersion; the curves for which this happens are called singular. An example of a singular curve is a segment of the $x$-axis in 3-space with the Martinet distribution given by  $dz-y^2dx=0$: the perturbations of this curve as a horizontal curve with fixed endpoints are just its reparameterizations. 

Thus we need to check that $\g_0$ is not singular, and we  use a  criterion due to Hsu \cite{Hsu}, in the form presented in Section 4 of \cite{Mon2}. We formulate this criterion in general form and then apply it in our special case. 

Let $M$ be a smooth manifold with a distribution $\D$, and let $\eta_i$ be a collection of differential 1-forms that constitute a basis of the conormal bundle $\D^\perp$. Then every covector $\D^\perp$ can be written as $\sum \lambda_i \eta_i$. Consider the differential 2-form $\omega(\lambda):= \sum \lambda_i d\eta_i$. A horizontal curve $\g$ is not singular if its tangent vector $\dot \g$ never lies in the kernel of the restriction of $\omega(\lambda)$ to $\D$. 

In our situation, the 1-forms $\theta_i, i=1,\ldots,n$, generate $\D^\perp$, but these  forms are not independent: they satisfy one linear relation. Along the curve $\g_0$, one has $\theta_i=dp_i$. It follows from the formula for the perimeter in Remark \ref{rmk:per} that,  along $\g_0$,  
$$
0=dF = 2\tan \left(\frac{\pi}{n} \right) \sum_{i=1}^n dp_i.
$$
Therefore $\sum_{i=1}^n \theta_i = 0$ along $\g_0$, and to factor this relation out, we assume that $\sum_{i=1}^n \lambda_i =0$. 

Along $\g_0$, one has
$$
\frac{\partial \Phi_i}{\partial p_{i+1}} = - \frac{\partial \Phi_i}{\partial p_{i-1}} = \frac{1}{2\sin \left(\frac{2\pi}{n} \right)},\
\frac{\partial \Phi_i}{\partial p_{i}} = 0,\ \frac{\partial \Phi_i}{\partial \alpha_{i-1}} = \frac{\partial \Phi_i}{\partial \alpha_{i+1}} = \frac{1}{2\cos^2 \left(\frac{\pi}{n} \right)},
$$
hence 
$$
d\theta_i = \frac{1}{2\sin \left(\frac{2\pi}{n} \right)} d\alpha_i \wedge (dp_{i+1}-dp_{i-1}) + \frac{1}{2\cos^2 \left(\frac{\pi}{n} \right)} (d\alpha_i \wedge d\alpha_{i+1} - d\alpha_{i-1} \wedge d\alpha_{i}). 
$$
One also has
$$
\dot \g_0 = \frac{2\pi}{n} \sum_{j=1}^n \partial \alpha_i,
$$
therefore 
$$
\dot \g_0\, \lrcorner\, d\theta_i = \frac{\pi}{2n\cos^2 \left(\frac{\pi}{n} \right)} (d\alpha_{i-1} - 2d\alpha_i + d\alpha_{i+1}).
$$
Finally, assume that $\dot \g_0\, \lrcorner\, \omega(\lambda) = 0$ on $\D_n$, that is, $\omega(\lambda)(\dot \g_0,\xi_i)=0$ for $i=1,\dots,n$. Then $\lambda_{i-1}-2\lambda_i+\lambda_{i+1}=0$ for all $i$,
hence $\lambda_1=\ldots=\lambda_n$. But $\sum_{i+1}^n\lambda_i=0$, therefore $\omega(\lambda)=0$.

We conclude that the curve $\g_0$ is nonsingular, completing the proof.
\proofend

\section{Period four, the centrally symmetric case} \label{sect:4}

 In this section, we provide an explicit parameterization of centrally symmetric outer length billiard curves $\g$ that possess invariant curves consisting of 4-periodic points. 

\begin{lemma} \label{lm:sym}
If a centrally symmetric curve $\g$ possesses an invariant curve $\delta$ consisting of periodic points, then $\delta$ is centrally symmetric.
\end{lemma}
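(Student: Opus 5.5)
The plan is to compare $\delta$ with its image under the central symmetry and show that the two curves coincide. Let $\sigma$ be the central symmetry of the plane fixing the center of $\g$. Since $\g$ is centrally symmetric, $\sigma(\g)=\g$. The outer length billiard rule is built from tangent lines, tangent circles and common tangents, so it is equivariant under isometries preserving $\g$. Hence $\sigma\circ T=T\circ\sigma$ on the exterior of $\g$.

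In the coordinates $(R,\alpha)$, $\sigma$ acts by $(R,\alpha)\mapsto(R,\alpha+\pi)$. This preserves the invariant form $dR\wedge d\alpha$ and the generating function, since $S(\alpha_1+\pi,\alpha_2+\pi)=S(\alpha_1,\alpha_2)$ because $p(\alpha+\pi)=p(\alpha)$. It also lifts to a translation commuting with the lift of $T$. Consequently $\sigma(\delta)$ is again a $T$-invariant curve consisting of periodic points, with the same period $n$ and the same rotation number $p/n$ as $\delta$. I would consider only homotopically nontrivial invariant curves, since these are the ones relevant for the billiard. By Birkhoff's theorem for the positive twist map $T$ (Section \ref{sect:form}), both $\delta$ and $\sigma(\delta)$ are then graphs $R=f(\alpha)$ and $R=f(\alpha+\pi)$ over the $\alpha$-circle.

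The key step is to show that two invariant graphs of a twist map that consist of periodic orbits of the same type $(p,n)$ must coincide. I would do this with Aubry--Mather theory applied to the generating function $S$, which satisfies $S_{12}<0$ by the lemma on second derivatives.
\begin{enumerate}
\item Every orbit lying on an invariant graph of a twist map is action-minimizing (a classical fact). So all orbits on $\delta$ are minimal periodic configurations of type $(p,n)$.
\item By Aubry's crossing lemma, any two minimal configurations of the same type are totally ordered: their lifts do not cross.
\item The orbits on $\delta$ pass through every value of $\alpha$. Any other minimal $(p,n)$ configuration must be ordered with respect to all of them, so it is squeezed between arbitrarily close orbits lying on $\delta$. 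Hence it coincides with one of them, and every minimal $(p,n)$-orbit lies on $\delta$.
\item Since $\sigma$ preserves $S$, it maps minimizers to minimizers. Therefore the orbits on $\sigma(\delta)$ are minimal $(p,n)$ configurations, so $\sigma(\delta)\subset\delta$. Both are graphs over the whole circle, so $\sigma(\delta)=\delta$.
\end{enumerate}

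As a more elementary sanity check, one can show directly that $\delta$ and $\sigma(\delta)$ must intersect. Suppose instead that $f(\alpha+\pi)>f(\alpha)$ for all $\alpha$. Applying this twice gives $f(\alpha+2\pi)>f(\alpha)$, which contradicts periodicity; the opposite strict inequality fails the same way. The main obstacle is step~3, the passage from ``intersecting'' to ``equal''. It requires a careful invocation of the minimality and ordering properties, and a check that $S$ satisfies the hypotheses of the Aubry--Mather framework on the relevant domain. I would try to cite these results in the standard form for area-preserving twist maps with generating function rather than reprove them.
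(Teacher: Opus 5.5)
Your proposal is correct and follows essentially the same route as the paper: the central symmetry commutes with $T$, so $\sigma(\delta)$ is a rotational invariant curve with the same rational rotation number. The paper then cites the Aubry--Mather fact (Bangert's survey, Theorem 5.8) that such a curve made entirely of periodic orbits carries all minimizers of that rotation number, which forces $\sigma(\delta)=\delta$. Your steps 1--3 unpack that cited result, and in step 4 you could equally apply step 1 directly to $\sigma(\delta)$ instead of using the $\sigma$-invariance of $S$.
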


\proof
Let $S$ be the reflection in the origin. Since $\gamma$ is centrally-symmetric, 
$S$ and $T$ commute. Hence $S(\delta)$ is an invariant curve of $T$ with the same rotation number.

 We shall use the following consequence of the Aubry-Mather theory on the structure of minimal orbits with rational rotation number (see, for example, the survey paper \cite{bangert}, Theorem 5.8):

{\it If $\delta$ is a rotational invariant curve of a twist map of the cylinder with rational rotation number which consists solely of periodic orbits, then any other rotational invariant curve with the same rotation number must coincide with $\delta$.
}

{Indeed, it follows from this theory that all orbits lying on a rotational invariant curve are minimizing. Moreover, if the curve has a rational rotation number $\rho$ and all the orbits on this curve are periodic, then the set of these orbits coincides with the set 
of the minimizers  with the rotation number $\rho$}.

Applying this fact to the invariant curves $\delta$ and $S(\delta)$ we conclude that 
$\delta=S(\delta),$
as claimed.
\proofend

By the above lemma, the periodic quadrilateral must be parallelograms. 
Let us specialize the discussion of the preceding section to this particular situation. 

The space ${\C}$ of origin-centered parallelograms of a fixed perimeter is 3-dimensional. Such a parallelogram is characterized by four parameters $(\alpha_1,\alpha_2,p_1,p_2)$, see Figure \ref{para}. The radii of the circles are equal to $p_1$ and $p_2$, and the perimeter of the parallelogram equals $\frac{4(p_1+p_2)}{\sin(\alpha_2-\alpha_1)}$. Assume that the perimeter is 4, that is, $p_1+p_2 = \sin(\alpha_2-\alpha_1)$.

 \begin{figure}[ht]
\centering
\includegraphics[width=3.7in]{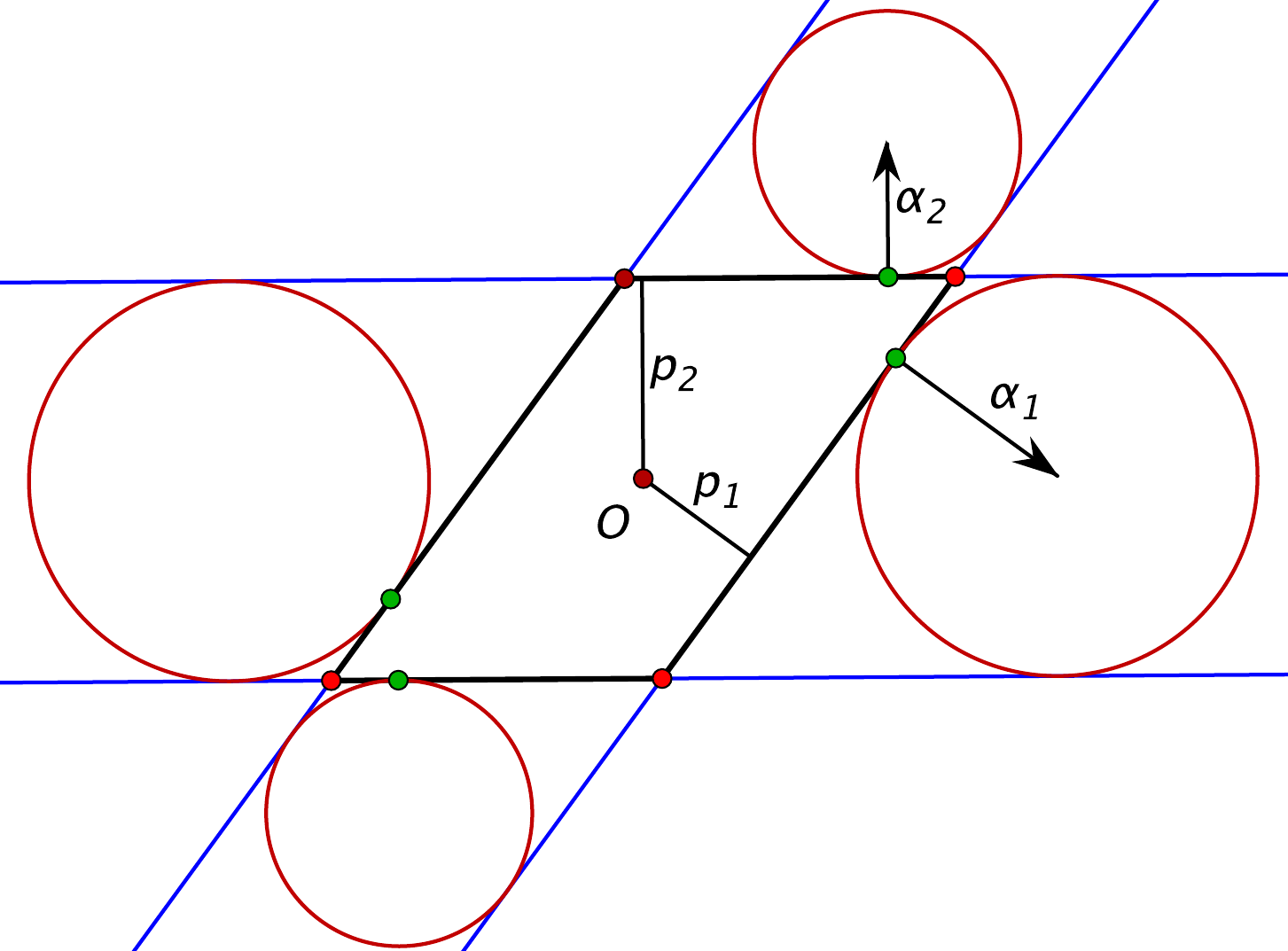}
\caption{Parameters of an origin centered parallelogram.}
\label{para}
\end{figure}

We apply Lemma \ref{lm:coord} in this situation. The angles involved are $\alpha_1,\alpha_2,\alpha_1+\pi,\alpha_2+\pi$, and the respective support numbers are $p_1,p_2,p_1,p_2$. A calculation yields

\begin{lemma} \label{lm:fields}
The 2-dimensional distribution $\D$ on $\C$ is spanned by the vector fields
\begin{equation*}
\begin{aligned}
&\xi_1 =\partial \alpha_1 - \cot(\alpha_2-\alpha_1) (p_1+p_2) \partial p_1 = \partial \alpha_1 - \cos(\alpha_2-\alpha_1) \partial p_1,\\
&\xi_2 =\partial \alpha_2 + \cot(\alpha_2-\alpha_1) (p_1+p_2) \partial p_2 = \partial \alpha_2 + \cos(\alpha_2-\alpha_1) \partial p_2,
\end{aligned}
\end{equation*}
and one has $[\xi_1,\xi_2]= \sin(\alpha_2-\alpha_1) (\partial p_1 - \partial p_2) \neq 0$. 
\end{lemma}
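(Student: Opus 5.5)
The plan is to specialize formula (\ref{eq:cr}) of Lemma \ref{lm:coord} to the four sides of an origin-centered parallelogram, then check that the resulting fields are tangent to $\C$, and finally compute their bracket directly. Write $\omega=\alpha_2-\alpha_1\in(0,\pi)$. The sides are $L_1,L_2,L_3,L_4$ with normal angles $\alpha_1,\alpha_2,\alpha_1+\pi,\alpha_2+\pi$ and support numbers $p_1,p_2,p_1,p_2$.

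\textbf{The field $\xi_1$.} For $i=1$ the neighbours are $L_4$ (angle $\alpha_2-\pi$, taken modulo $2\pi$) and $L_2$. The angle differences entering (\ref{eq:cr}) are therefore
\[
\alpha_1-\alpha_0=\pi-\omega,\qquad \alpha_2-\alpha_1=\omega,\qquad \alpha_2-\alpha_0=\pi.
\]
The numerator of (\ref{eq:cr}) becomes $(p_1+p_2)\bigl(\sin^2\frac\omega2-\cos^2\frac\omega2\bigr)=-(p_1+p_2)\cos\omega$. The denominator becomes $2\sin\frac\pi2\,\sin\frac\omega2\cos\frac\omega2=\sin\omega$. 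Hence $\Phi_1=-(p_1+p_2)\cot\omega$, which equals $-\cos\omega$ after using the perimeter normalization $p_1+p_2=\sin\omega$.

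\textbf{The field $\xi_2$.} For $i=2$ the neighbours are $L_1$ and $L_3$, with differences $\omega$, $\pi-\omega$ and total $\pi$. The same computation gives $\Phi_2=+(p_1+p_2)\cot\omega=\cos\omega$.

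\textbf{Symmetry and tangency.} By central symmetry, $\Phi_3=\Phi_1$ and $\Phi_4=\Phi_2$. So the symmetric combinations $\xi_1+\xi_3$ and $\xi_2+\xi_4$ of the fields on $\P_4$ preserve the class of centrally symmetric parallelograms. In the coordinates $(\alpha_1,\alpha_2,p_1,p_2)$ these combinations are exactly the stated $\xi_1$ and $\xi_2$. Lemma \ref{lm:per} guarantees tangency to the level set $\C$; as a sanity check, apply the fields to $p_1+p_2-\sin\omega$:
\[
\xi_1(p_1+p_2-\sin\omega)=-\cos\omega+\cos\omega=0,
\]
and similarly for $\xi_2$.

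\textbf{The bracket.} Since the coefficients depend only on $\alpha_1,\alpha_2$,
\[
[\xi_1,\xi_2]=\xi_1(\cos\omega)\,\partial p_2-\xi_2(-\cos\omega)\,\partial p_1 .
\]
Here $\partial_{\alpha_1}\cos(\alpha_2-\alpha_1)=\sin\omega$ and $\partial_{\alpha_2}(-\cos(\alpha_2-\alpha_1))=\sin\omega$. This yields $\pm\sin\omega\,(\partial p_1-\partial p_2)$, with the overall sign fixed by the bracket convention in use. In either case it is nonzero because $0<\omega<\pi$. Moreover it is transverse to $\D$, since it has no $\partial\alpha$ components, and it is tangent to $\C$, since it annihilates $p_1+p_2-\sin\omega$.

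\textbf{Main obstacle.} The only delicate step is the bookkeeping of cyclic indices and angles modulo $2\pi$ in (\ref{eq:cr}). In particular, one must use the difference $\pi-\omega$ rather than $\omega$ for the neighbour $L_4$. One must also check that the circle chosen in the definition of $\D_n$ (negative sides of $L_{i\pm1}$, positive side of $L_i$) is the one of radius $p_1$, respectively $p_2$, as in Figure \ref{para}; this fixes the signs of $\Phi_1$ and $\Phi_2$.
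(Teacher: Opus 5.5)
Your proposal is correct and follows the paper's route: you specialize (\ref{eq:cr}) to the angles $\alpha_1,\alpha_2,\alpha_1+\pi,\alpha_2+\pi$ with supports $p_1,p_2,p_1,p_2$, and your reduction to $\xi_1+\xi_3,\ \xi_2+\xi_4$ plus the tangency check make explicit what the paper leaves implicit. Your closing worry about the circle is moot, since (\ref{eq:cr}) already encodes the choice; note also that for $L_1$ the circle lies between the parallel sides $L_2,L_4$ and has radius $p_2$.

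Do not hedge on the bracket. With the convention you wrote down, which is also the one the paper uses in Proposition \ref{prop:nonint}, you get $[\xi_1,\xi_2]=\sin\omega\,(\partial p_2-\partial p_1)$. So the sign in the statement is a harmless slip, because only $[\xi_1,\xi_2]\neq 0$ is ever used.
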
 

We see that $\D$ is a completely non-integrable, i.e., it is a contact structure. As a contact form, one may take
$$
\lambda= \cos(\alpha_2-\alpha_1)  (d\alpha_1 + d\alpha_2) + dp_1-dp_2,
$$
whereas the differential of the perimeter, a  1-form that vanishes on $\C$, is 
$$
dF= \cos(\alpha_2-\alpha_1)  (d\alpha_1 - d\alpha_2) - dp_1-dp_2. 
$$

\begin{corollary} \label{cor:Iv}
There are no open sets consisting of 4-periodic orbits.
\end{corollary}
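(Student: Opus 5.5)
The plan is to argue by contradiction, using the fact that a contact distribution on a 3-manifold has no integral surfaces. Suppose the outer length billiard map $T$ of the centrally symmetric table $\g$ has an open set $U$ of 4-periodic points. Each point $M\in U$ is a vertex of a circumscribed 4-periodic quadrilateral, and all these quadrilaterals have the same perimeter, since $dF$ vanishes on $\D_4$ by Lemma \ref{lm:per} and the family is connected. So, after rescaling, they lie in the space of quadrilaterals of perimeter 4. The first step is to show that these quadrilaterals are parallelograms, which places the family in the 3-dimensional space $\C$ of origin-centered parallelograms with the contact distribution $\D$ of Lemma \ref{lm:fields}.

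For the parallelogram step, I would argue as in Lemma \ref{lm:sym}. Let $S$ be the central symmetry. It commutes with $T$, so $S(U)$ is again open and consists of 4-periodic points with the same rotation number $1/4$. By the Aubry--Mather fact quoted in the proof of Lemma \ref{lm:sym}, every point of $U$ lies on the set of minimizers of rotation number $1/4$. Inside an open set of periodic points, this set is foliated by rotational invariant curves of periodic points. Applying Lemma \ref{lm:sym} to each such curve shows it is $S$-invariant. Hence for $M\in U$ one has $T^2M=S(M)$, and the orbit $M, TM, T^2M, T^3M$ is an origin-centered parallelogram. This is the step I expect to be the main obstacle. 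One must check carefully that an open set of periodic points really is covered by invariant curves of this type, or else replace this with a direct argument: the closed set $\{T^2M=S(M)\}$ contains the minimizing orbits and therefore has nonempty interior in $U$. That weaker conclusion is all we need.

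Next, I would show that the resulting two-parameter family $\Sigma\subset\C$ of parallelograms is an integral surface of $\D$. Parameterize $\Sigma$ locally by the support angles $(\alpha_1,\alpha_2)$ of two adjacent sides; by the twist property these are local coordinates on the phase space near a point of $U$. Each side of a periodic parallelogram is tangent to $\g$, so $p_i=p(\alpha_i)$. Varying one parameter moves the side $L_i$ by rotating it about its tangency point with $\g$. By the billiard rule (the construction of the auxiliary circles in Section \ref{sect:form}), this tangency point coincides with the point $C_i$ of Figure \ref{defd}. Therefore every tangent vector to $\Sigma$ is a combination of $\xi_1$ and $\xi_2$, so $T\Sigma=\D$ along $\Sigma$, both spaces being 2-dimensional.

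Finally, an integral surface of $\D$ must have its tangent fields $\xi_1,\xi_2$ closed under brackets, so $[\xi_1,\xi_2]$ would have to lie in $\D$. This contradicts Lemma \ref{lm:fields}: there $[\xi_1,\xi_2]=\sin(\alpha_2-\alpha_1)(\partial p_1-\partial p_2)$ is nonzero and transverse to $\D$. Equivalently, $\lambda\wedge d\lambda\neq0$ for the contact form $\lambda$, whereas $\lambda$ and $d\lambda$ would both restrict to zero on $\Sigma$. Hence no open set of 4-periodic points exists.
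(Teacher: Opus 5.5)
Your Steps 2 and 3 are exactly the paper's proof. A two-parameter family of periodic parallelograms would be a surface in $\C$ tangent to $\D$, and this is impossible because $[\xi_1,\xi_2]\notin\D$ by Lemma \ref{lm:fields}. The paper takes the parallelogram property for granted from the setting of Section \ref{sect:4}. Your Step 1, which tries to derive it for an arbitrary open set $U$ of 4-periodic points, does not work.

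The Aubry--Mather fact used in Lemma \ref{lm:sym} concerns a rotational invariant curve consisting of periodic points. Nothing in your hypothesis produces such a curve inside $U$, so you cannot conclude that points of $U$ are minimizers. In fact the opposite holds:
\begin{itemize}
\item On $U$ one has $DT^4=\mathrm{Id}$. So every Jacobi field along an orbit in $U$ is 4-periodic, and the Hessian of the periodic action $\sum_{i=1}^{4}S(\alpha_i,\alpha_{i+1})$, $\alpha_5=\alpha_1+2\pi$, has a two-dimensional kernel.
\item This Hessian is a cyclic tridiagonal matrix with off-diagonal entries $S_{12}<0$. By Perron--Frobenius its lowest eigenvalue is simple. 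Since $0$ is a double eigenvalue, the lowest eigenvalue is negative, so the orbits in $U$ are not minimizing.
\item Orbits on rotational invariant curves are minimizing, so no such curve even meets $U$. The foliation you posit cannot exist.
\end{itemize}

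Your fallback is a non sequitur as well. A closed set containing the minimizing orbits need not have interior, and those orbits do not lie in $U$ anyway. Nothing forces $T^2M=S(M)$ on $U$. The map $ST^2$ is just an orientation-preserving involution of the invariant open set $\bigcup_{k}T^k(U\cup S(U))$ of 4-periodic points. Its fixed points are exactly the points on parallelogram orbits, and these are isolated unless $ST^2$ is the identity nearby.

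There are two ways to repair this. You can read the corollary as the paper effectively does: as a statement that no open set of the phase space consists of 4-periodic parallelogram orbits. Then your Steps 2 and 3 are a complete proof and Step 1 should be dropped. Alternatively, for arbitrary 4-periodic points, invoke Theorem \ref{thm:34}. Its argument uses only that $T$ and $T^2$ are twist maps, and needs neither central symmetry nor parallelograms.
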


\proof
Such a set would be a surface tangent to the distribution $\D$; this would contradict its complete non-integrability.
\proofend

Now we shall parameterize the centrally symmetric smooth strictly convex outer length billiard curves $\g$ that possess invariant curves consisting of 4-periodic points. Let us change coordinates:
$$
x=\frac{\alpha_1+\alpha_2}{2},\ y=2\cos(\alpha_2-\alpha_1),\ z=p_2-p_1.
$$

Then $x\in [0,2\pi)$ is an angle, while $y$ and $z$ are reals. In these coordinates, the contact 1-form has the standard form $\lambda=ydx-dz$. We are interested in horizontal, Legendrian, curves, parameterized by the variable $x$; they are given by
$$
z=f(x), y = \frac{df(x)}{dx},
$$
where $f(x)$ is a differentiable function satisfying $|f'(x)|\le 2$ for all $x$.

In addition, we have an action of the cyclic group $\Z_4$ given by the action of its generator:
$$
(\alpha_1, \alpha_2, p_1, p_2) \mapsto (\alpha_2,\alpha_1+\pi, p_2, p_1),
$$
see Figure \ref{para}. The Legendrian curves involved are invariant under this action, hence, in the new coordinates, one has $f(x+\frac{\pi}{2}) = - f(x)$. Without loss of generality, we may assume that $f(0)=0$.

Expressing the old coordinates in terms of the new ones yields
\begin{equation*} \label{eq:old}
\begin{aligned}
&\alpha_1=x-\frac12 \arccos\left(\frac{f'(x)}{2}\right),\ \alpha_2=x+\frac12 \arccos\left(\frac{f'(x)}{2}\right),\\
&p_1=\frac{\sqrt{4-f'(x)^2}-f(x)}{4},\ p_2=\frac{\sqrt{4-f'(x)^2}+f(x)}{4}.
\end{aligned}
\end{equation*}

We arrive at the next result.

\begin{theorem}  \label{thm:4per}
Let $f(x)$ be a $2\pi$-periodic function satisfying $f(x+\frac{\pi}{2}) = - f(x)$ and $|f'(x)| < 2$ for all $x$. The centrally symmetric outer length billiard curve given by the parametric equation
$$
\g(x)= p (x) (\cos\alpha(x),\sin\alpha(x)) + \frac{p' (x)}{\alpha'(x)} (-\sin\alpha(x), \cos\alpha(x)),
$$
where 
$$
p(x)=\frac{\sqrt{4-f'(x)^2}-f(x)}{4},  \alpha(x)=x-\frac12 \arccos\left(\frac{f'(x)}{2}\right),
$$
possesses an invariant curve consisting of 4-periodic points, and all such outer length billiard curves are given by the above formula with a suitable function $f(x)$.
\end{theorem}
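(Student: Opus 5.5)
The plan is to make rigorous the correspondence set up before the statement: a one-parameter family of 4-periodic orbits gives a closed Legendrian curve in $\C$, and such a curve is the 1-jet of a function $f$.

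\textbf{Necessity.} Suppose $\g$ is centrally symmetric and has an invariant curve of 4-periodic points. By Lemma \ref{lm:sym} the periodic quadrilaterals are origin-centered parallelograms. Their sides envelop $\g$, and each side rotates about its tangency point with $\g$. This point is the tangency point $C_i$ of the auxiliary circle, exactly as in the discussion around Figure \ref{inv}. Hence the family is a closed curve in $\C$ tangent to $\D$, i.e.\ Legendrian for $\lambda=y\,dx-dz$.

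\begin{itemize}
\item \emph{Graph over $x$.} The sides of the moving parallelograms sweep all support directions monotonically. So $\alpha_1$, and with it $x=(\alpha_1+\alpha_2)/2$, is monotone along the curve. I expect to justify this using the twist property of $T$ (Corollary after the second-derivative lemma), or simply the fact that the sides are tangent to a strictly convex curve. The curve is then a graph over $x$.
\item \emph{The function $f$.} Legendrian means $z=f(x)$ and $y=f'(x)$. Since $y=2\cos(\alpha_2-\alpha_1)$ with $0<\alpha_2-\alpha_1<\pi$ for a nondegenerate parallelogram, we get $|f'|<2$.
\item \emph{Symmetry of $f$.} The $\Z_4$ action $(\alpha_1,\alpha_2,p_1,p_2)\mapsto(\alpha_2,\alpha_1+\pi,p_2,p_1)$ sends $x\mapsto x+\pi/2$, $y\mapsto y$, $z\mapsto -z$. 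The curve is invariant under it, since the orbit of a point is again on the curve. This gives $f(x+\pi/2)=-f(x)$, and $2\pi$-periodicity follows.
\end{itemize}

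\textbf{Recovering $\g$.} The table is the envelope of the lines $L_1(x)$, with support coordinates $(\alpha(x),p(x))=(\alpha_1,p_1)$ given by the inversion formulas before the statement. Formula (\ref{eq:envelope}) in a general parameter $x$ reads $\g=p\,(\cos\alpha,\sin\alpha)+\frac{dp/dx}{d\alpha/dx}(-\sin\alpha,\cos\alpha)$, which is exactly the formula in the theorem. Using $L_2$ instead, or the opposite sides, yields the same curve up to reparametrization and central symmetry, because of the $\Z_4$ invariance.

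\textbf{Sufficiency.} Given $f$, define the curve in $\C$ by the inversion formulas. It is Legendrian by construction, so each side moves infinitesimally by rotation about $C_i$. Hence the envelope point of each side line is $C_i$, the point where the side touches the corresponding circle. Each parallelogram is then circumscribed about the envelope $\g$, and the circles tangent to consecutive sides touch them at the points of $\g$. This is precisely the outer length billiard rule, so consecutive vertices are related by $T$ and each parallelogram is a 4-periodic orbit. The $\Z_4$ invariance guarantees that all four sides envelop one and the same closed curve, centrally symmetric by the symmetry $(\alpha,p)\mapsto(\alpha+\pi,p)$ between opposite sides. The vertices then trace an invariant curve consisting of 4-periodic points.

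\textbf{Main obstacle.} The delicate step is verifying that the envelope $\g$ is a genuine smooth strictly convex oval. This requires $\alpha'(x)>0$ and a positive radius of curvature $\frac{d}{d\alpha}\left(\frac{dp}{d\alpha}\right)+p>0$, and these impose extra conditions on $f$ beyond $|f'|<2$, involving $f''$ and $f'''$. I would compute $\alpha'(x)=1+\frac{f''}{2\sqrt{4-f'^2}}$ and the curvature radius explicitly. Then I would either show the conditions are implied, or, more likely, interpret the theorem as stated for those $f$ for which the formula defines an oval, e.g.\ $f$ small in $C^3$. The monotonicity in $x$ for the necessity direction is the mirror image of the same issue.
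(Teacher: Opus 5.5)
Your plan is the paper's own argument. The paper gives no proof beyond the discussion preceding Theorem~\ref{thm:4per}: Lemma~\ref{lm:sym}, Legendrian curves of $\lambda=y\,dx-dz$ as 1-jets $z=f(x)$, $y=f'(x)$, the $\Z_4$-antisymmetry, inversion and the envelope.

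Your convexity caveat is correct and more careful than the paper, which concedes the point only through its ``sufficiently small $\eps$'' remark. The hypothesis $|f'|<2$ does not force an oval: for $f=\eps\sin 6x$ with $\frac19<\eps<\frac13$ one gets $\alpha'=1-9\eps<0$ where $\sin 6x=1$. Moreover, horizontality gives $dp/d\alpha=-\cos(\alpha_2-\alpha_1)=-f'/2$, so the needed conditions $\alpha'>0$ and $p-\frac{f''}{2\alpha'}>0$ involve only $f''$, not $f'''$.

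There are two slips to fix.
\begin{enumerate}
\item The generator acts by $(x,y,z)\mapsto(x+\frac{\pi}{2},-y,-z)$. Your $y\mapsto y$ would not preserve $\ker\lambda$.
\item Solving $p_2-p_1=f$, $p_1+p_2=\sin(\alpha_2-\alpha_1)=\frac12\sqrt{4-f'^2}$ gives $p_1=\frac{\sqrt{4-f'^2}-2f}{4}$. So the inversion formula you quote, and hence $p(x)$ in the statement, contains a factor-$2$ misprint. You would detect it when checking $p'/\alpha'=-f'/2$.
\end{enumerate}
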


For example, if $\g$ is a circle, then $\alpha_2-\alpha_1=\frac{\pi}{2}$, and $z=y=0$, that is, $f(x)=0$ identically. Let $f(x)$ be a differentiable function satisfying $f(x+\frac{\pi}{2}) = - f(x)$ (i.e., its Fourier expansion contains only the 
$(2+4k)$th harmonics with $k\in \Z$; for example, $\sin 2x$ will do). Then, for sufficiently small $\eps$, the function $\eps f(x)$ will yield a desired oval. Thus one has a functional space of examples.

\begin{example} \label{ex:ell}
{\rm
Let us consider the case when $\g$ is an ellipse. The outer length billiard about an ellipse is completely integrable, see \cite{BBF} or \cite{ACT}, and the invariant curves are the confocal ellipses.  Therefore the 4-periodic trajectories of the outer length billiard about $\g$ are the 4-periodic billiard trajectories in a confocal ellipse  for which $\g$ serves as a caustic.

We use the information about 4-periodic orbits from \cite{BM}, and we use the notation therein. The support function of the origin-centered ellipse with the semi-axes $a>b$ is given by  $h(\psi)=\sqrt{a^2 \cos^2 \psi + b^2 \sin^2 \psi}$. Consider a 4-periodic orbit in this ellipse, and let two consecutive oriented lines that form this orbit have support coordinates $(\varphi,p)$ and $(\varphi_1,p_1)$. Set $\psi=\frac{\varphi+\varphi_1}{2}, d(\psi)=\frac{\varphi-\varphi_1}{2}$. Then 
$$
d(\psi)=\frac12 \arccos\left(\frac{b^2-a^2}{b^2+a^2}\cos 2\psi   \right).
$$

Let us relate these formulas to our notations above. We need the perimeter of the 4-periodic quadrilateral to be equal to four, hence $a^2+b^2=1$, or $a=\sin t, b= \cos t$ for some parameter $t$, see Figure \ref{per4}.
\begin{figure}[ht]
\centering
\includegraphics[width=1.8in]{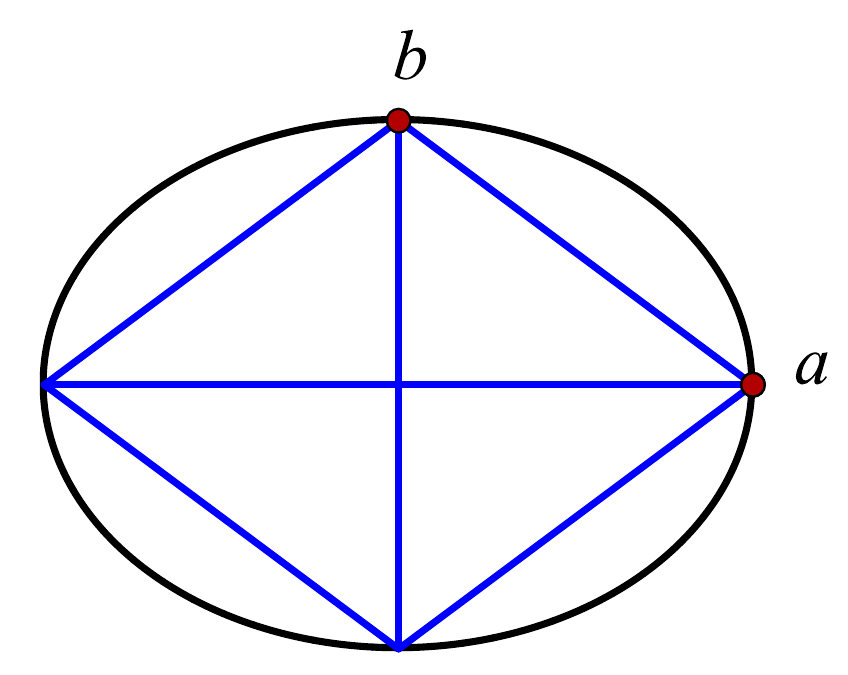}
\caption{A 4-periodic billiard orbit in an ellipse.}
\label{per4}
\end{figure}

We have 
$$
\psi=x,\ \varphi=\alpha_1,\ \varphi_1=\alpha_2,\ d=\frac{\alpha_2-\alpha_1}{2} = \frac12 \arccos\left(\frac{f'}{2}\right),
$$
and we conclude that $f(x)= \cos 2t \sin 2x$.
}
\end{example} 

Now we describe a construction analogous to the one used for Radon curves, cf. \cite{BBT}.

Since $R_0=p_1, R_2=p_0$ (see Figure \ref{para}) we get, using the formulas of Lemma \ref{lemma:partialsS},
$$
p_0'=-(p_0+p_1)\cot\omega, \ 
p_1'=(p_0+p_1)\cot\omega.
$$
Moreover,  since the perimeter of the parallelogram can be normalized to be equal to $4$, we have the following equations:
$$
p(\alpha_1)+p(\alpha_2)=\sin\omega,\ p'(\alpha_1)=-\cos\omega,\ p'(\alpha_2)=\cos\omega, \ \omega=\alpha_2-\alpha_1.
$$
Rotating  the plane, we may assume that $p'(0)=0$, thus $\alpha_2=\pi/2$ corresponds to $\alpha_1=0$,  and $p'(\pi/2)=0$ as  well.
We also need to choose $p(0), p(\pi/2)$ so that $p(0)+p(\pi/2)=1$.

Now take  any convex arc in the first quadrant defined by support function $p(\alpha), \alpha\in[0,\pi/2]$ satisfying the constraints
$$
p'(0)=p'(\pi/2)=0,\ p(0)+p(\pi/2)=1,
$$
and extend to the second quadrant as follows.
Set $\beta$ corresponding to $\alpha$: $$\beta\in [\pi/2, \pi],\quad \beta:=\alpha+\arccos(-p'(\alpha))$$ and $$
p(\beta):=-p(\alpha)+\sin(\beta-\alpha).
$$
Then we extend the curve from the upper half plane by central symmetry.

For example, we can start with a small perturbation of a circle or an ellipse and obtain, as a result, a convex closed curve.

\section{Two other proofs of the 3-periodic version of  Ivrii's conjecture} \label{sect:Ivr}

Following the maxim that it's better to have different proofs of the same theorem than the same proof of different theorems, we present two other proofs of the 3-periodic version of  Ivrii's conjecture for outer length billiard. The first is purely geometric; it is inspired by and similar to that in \cite{Sh}. The second proof is analogous to the ones in \cite{BZ,La,GT,TZ,Tu}.

\begin{theorem} \label{thm:Iv}
The set of 3-periodic points of an outer length billiard has empty interior. 
\end{theorem}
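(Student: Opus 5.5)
Suppose, for contradiction, that there is an open set $U$ in the exterior of $\g$ consisting of 3-periodic points. We follow the sub-Riemannian approach of Section \ref{sect:dis}. Each point of $U$ determines a 3-periodic orbit, i.e.\ a circumscribed triangle of extremal perimeter, so we obtain a 2-parameter family of triangles in $\P_3$. By the billiard rule, for each triangle $L_1L_2L_3$ in the family the circle tangent to $L_{i-1},L_{i+1}$ and to $\g$ at the point of tangency with $L_i$ is exactly the circle in the definition of $\D_3$. Hence that tangency point is $C_i$. Moving the point in $U$ changes each side $L_i$ by a line still tangent to $\g$, so to first order $L_i$ rotates about its tangency point $C_i$. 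Therefore the tangent plane of the family at every triangle lies in $\D_3$. By Lemma \ref{lm:per} the perimeter is constant along the family, and after rescaling the family becomes a 2-dimensional surface in $\C_3$ (dimension 5). One must check the map from $U$ to triangles is an immersion; this holds because distinct orbit points give distinct first sides $L_1$, i.e.\ distinct tangent pairs.

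Next we produce an integral surface of $\D_3$, which gives the contradiction. Away from degenerate configurations the surface $\Sigma$ is tangent to $\D_3$. Its tangent planes are 2-dimensional subspaces of the 3-dimensional distribution, and they are spanned by combinations $\sum a_i\xi_i$ with $a_i$ proportional to the angular velocities $d\alpha_i$ of the sides. Let $u,v$ be tangent fields on $\Sigma$. Then $[u,v]$ is tangent to $\Sigma$, so $\theta_j([u,v])=0$ for the forms $\theta_j=dp_j-\Phi_j d\alpha_j$ that cut out $\D_3$. Equivalently, $d\theta_j(u,v)=0$ for all $j$, so $\Sigma$ is isotropic for every $d\theta_j$ restricted to $\D_3$.

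The main step is to show that no 2-plane inside $\D_3$ is isotropic for all the $d\theta_j$ simultaneously. Write the 2-forms $d\theta_j|_{\D_3}$ in the basis $\xi_1,\xi_2,\xi_3$. Then $d\theta_j(\xi_k,\xi_l)=-\theta_j([\xi_k,\xi_l])$, and $[\xi_1,\xi_2]$ has components only along $\partial p_1,\partial p_2$, and similarly for the other brackets. By the remark following Proposition \ref{prop:nonint}, the authors checked that $\D_3$ has growth $(3,5)$ everywhere. So the map $\Lambda^2\D_3\to T\C_3/\D_3$ induced by brackets is onto a 2-dimensional space, and its kernel is 1-dimensional.

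A decomposable bivector $u\wedge v$ must lie in this kernel. The kernel is spanned by some $\omega=c_{12}\,\xi_1\wedge\xi_2+c_{23}\,\xi_2\wedge\xi_3+c_{31}\,\xi_3\wedge\xi_1$, and in dimension 3 every bivector is decomposable. So a single isotropic plane does exist, and we get a field of planes $E\subset\D_3$ to which $\Sigma$ must be tangent. This is the hard step. We must show that $E$ is not integrable, or that the plane field is incompatible with the geometry. For the incompatibility, use that each $a_i$ is nonzero: all three sides move when the point moves generically. Then test integrability of $E$ via the next-order brackets: take $u,v$ spanning $E$ and check whether $[u,v]\in E$.

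If this computation becomes unwieldy, we fall back on a second route. Each vertex of the triangles sweeps out an open set, and the family of triangles on which the oval acts as a 3-periodic caustic is 2-dimensional. Following \cite{GT}, we would then show that the three tangency points $C_i$ being tangency points of $\g$ forces the coefficients $c_{kl}$ to vanish, using the explicit formula (\ref{eq:cr}) and the fact that $[\xi_1,\xi_3]$ also involves only $\partial p_1,\partial p_3$. Either way, the main computation is to obtain the explicit bracket coefficients $\partial\Phi_{i+1}/\partial\alpha_i+\Phi_i\,\partial\Phi_{i+1}/\partial p_i$ for a general triangle and show they are nonzero. This gives a contradiction with the existence of the integral surface $\Sigma$.
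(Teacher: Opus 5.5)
There is a genuine gap, and it sits exactly at the step you call ``the hard step.'' You correctly note that the bracket map $\Lambda^2\D_3\to T\C_3/\D_3$ has a one-dimensional kernel. So at every point there is a unique plane $E\subset\D_3$ that is isotropic for all $d\theta_j$. The first-order condition therefore cannot exclude a horizontal surface, and everything hinges on proving that the plane field $E$ is not integrable. You never carry this out; you only propose to ``test integrability of $E$ via the next-order brackets.''

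Your closing sentence then falls back on showing that the coefficients $\partial\Phi_{i+1}/\partial\alpha_i+\Phi_i\,\partial\Phi_{i+1}/\partial p_i$ are nonzero. That is only the growth condition $(3,5)$, the very hypothesis from which you derived the existence of $E$. It does not preclude horizontal surfaces: on $\R^5$ the fields $\partial x,\partial y,\partial z+x\partial u+y\partial v$ span a $(3,5)$ distribution in which $\partial x,\partial y$ span a horizontal plane. The ``second route'' is not a coherent argument either. The form $\omega$ generates the kernel and is defined only up to scale, so asking that the $c_{kl}$ vanish is meaningless. Likewise, the fact that all three sides move (all $a_i\neq 0$) is perfectly compatible with $E$.

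The missing ingredient is the explicit second-order computation, which is what the paper's second proof does.

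\begin{itemize}
\item Place the origin at the incenter, scale the inradius to $1$, and write the exterior angles as $2u,2v,2w$ with $u+v+w=\pi$. The bracket coefficients are then the positive quantities $W_1=\tan u/\sin 2v$, $U_1=\tan w/\sin 2v$, and their cyclic analogues.
\item The first-order condition $[\xi_1,\xi_2]-a[\xi_2,\xi_3]-b[\xi_3,\xi_1]=0$ gives $E=\mathrm{span}(\xi_1+a\xi_3,\ \xi_2+b\xi_3)$ with $a=-\tan v/\tan u$ and $b=-\tan w/\tan u$.
\item These depend only on the angles, because the frame $\xi_i$ is invariant under similarities. Hence $\xi_i$ acts on $a,b$ as $\partial\alpha_i$.
\item The bracket $[\xi_1+a\xi_3,\xi_2+b\xi_3]$ has no $\partial\alpha_1,\partial\alpha_2$ components. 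So integrability of $E$ forces $\xi_1(b)+a\xi_3(b)-\xi_2(a)-b\xi_3(a)=0$.
\item Twice this expression is a sum of six strictly negative terms, such as $-1/(\cos^2 w\tan u)$ and $-\tan w/\sin^2 u$, which gives the contradiction.
\end{itemize}

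Without this or an equivalent computation, your proposal does not reach a contradiction. The paper also gives proofs that avoid the computation altogether: one via the positive twist property of $T^2$, and one by an elementary geometric argument.
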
 

\noindent{\bf First proof.}
Let $\g$ be an oval that possesses an open set of 3-periodic orbits. We work toward a contradiction.

Let $ABC$ be such orbit and let $U, V$ and $W$ be the tangency points of the excircles with the respective sides, see Figure \ref{def}. These points are also the tangency points of the sides of the triangle with $\g$. 

 \begin{figure}[ht]
\centering
\includegraphics[width=3.3in]{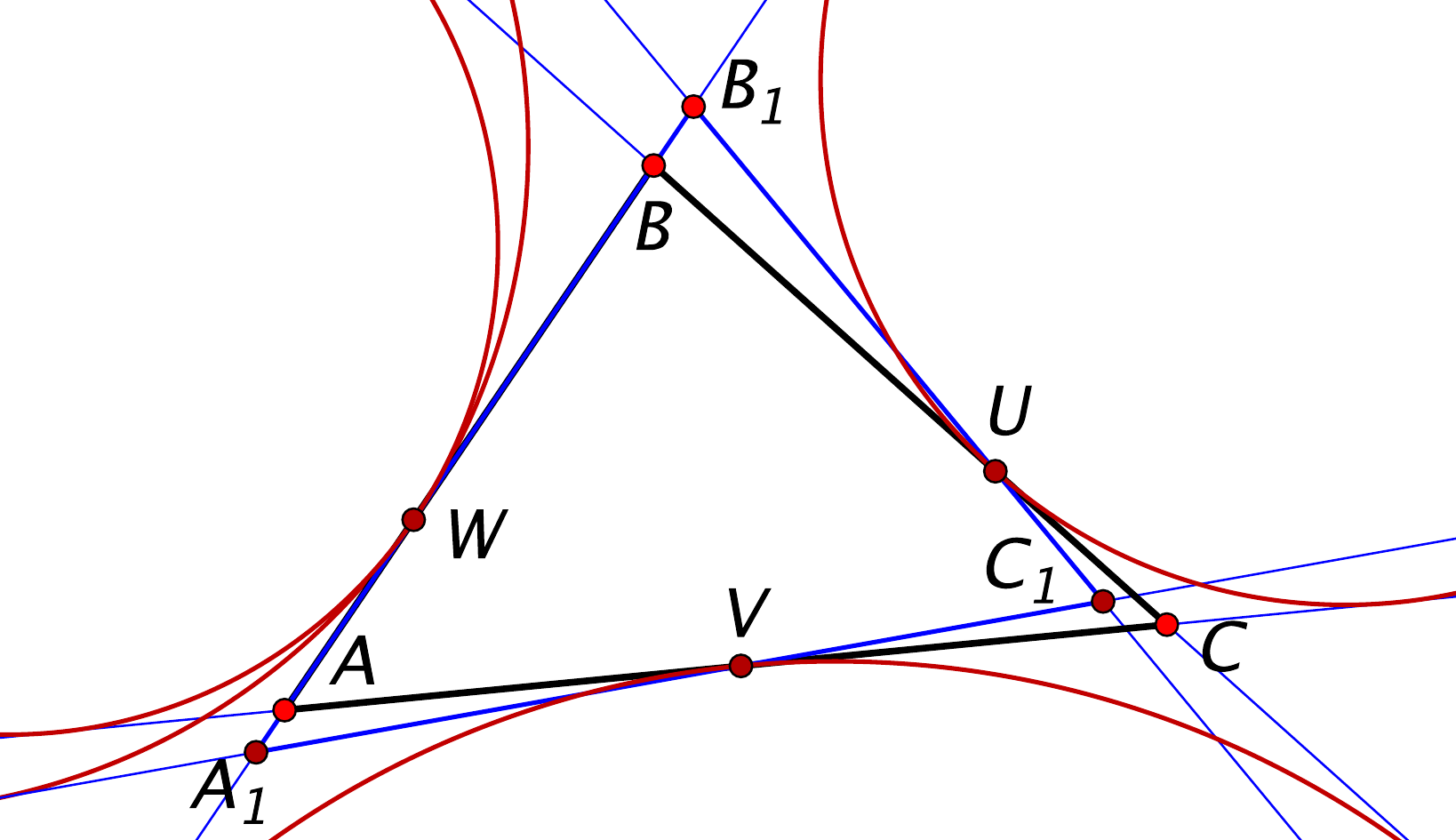}
\caption{A deformation of a 3-periodic orbit.}
\label{def}
\end{figure}

There exists an infinitesimal deformation of this orbit, $A_1B_1C_1$, where points $A_1$ and $B_1$ lie on the side $AB$. The corresponding infinitesimal deformation of the sides $AC$ and $BC$ are their rotations about points $V$ and $U$.

First, we claim that if point $A_1$ has moved from point $W$, then point $B_1$ has also moved from point $W$, as shown in Figure \ref{def}. Therefore the sides $AC$ and $BC$ rotate counterclockwise and clockwise, respectively. 

\begin{figure}[ht]
\centering
\includegraphics[width=3.7in]{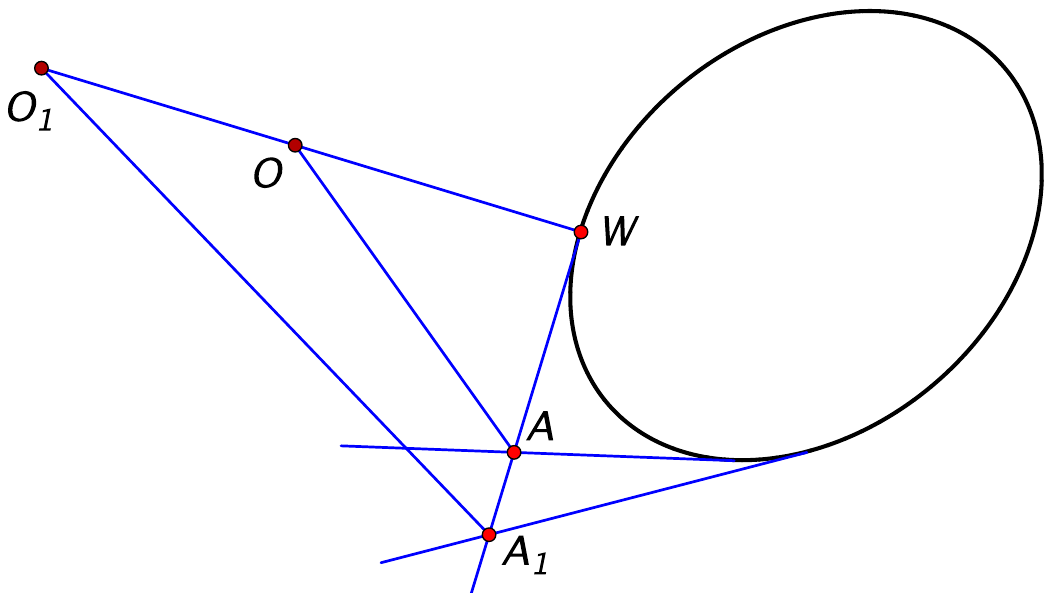}
\caption{Moving point $A$.}
\label{move}
\end{figure}

Indeed, the point $W$ is the point of tangency of the tangent line to $\g$ from point $A$, hence it stays put. If  point $A$ moves from point $W$, then the angle under which $\g$ is seen from this point becomes smaller, and the exterior angle gets bigger. Therefore the bisector of the exterior angle  is turning in the positive sense, and its intersection point with the normal to $\g$ at point $W$ moves farther from $\g$ -- these intersection points are the centers of the respective circles, points $O$ and $O_1$ in Figure \ref{move}. 
Conversely, of point $B$ moves toward point $W$, the center $O$ of the circle also moves toward point $W$. As we saw, this is not the case,  hence point $B$  moves from point $W$, as claimed. 

Next let us examine the infinitesimal motions of point $U$ in two cases: when the line $AC$ rotates counterclockwise about point $V$, while the other sides are fixed, and when the line $BC$ rotates clockwise about point $U$, while the other sides are fixed. We claim that, in both cases, the velocity of point $U$ has the direction toward point $B$.

 \begin{figure}[ht]
\centering
\includegraphics[width=3in]{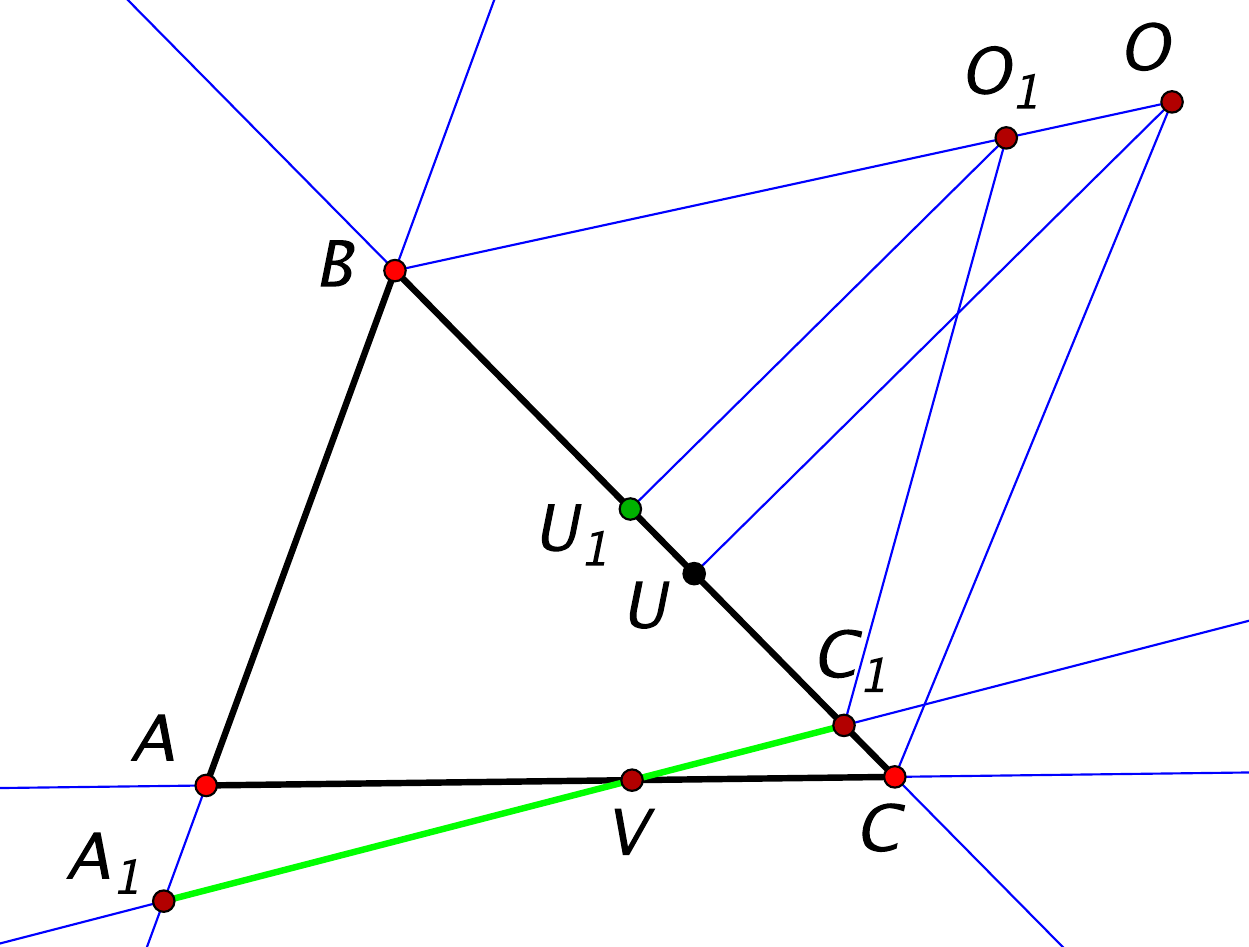}
\caption{Rotation of the line $AC$.}
\label{AC}
\end{figure}

The first case is depicted in Figure \ref{AC}.  The bisector of the exterior angle, $CO$, becomes $C_1O_1$, point $C$ is moving toward point $B$ and the bisector is rotating counterclockwise. Therefore point $O$, the center of the circle tangent the line $BC$ at point $U$, moves toward point $B$, to point $O_1$, and  hence point $U$, its projection on line $BC$,  moves toward point $B$, to $U_1$,  as claimed. 

\begin{figure}[ht]
\centering
\includegraphics[width=3.3in]{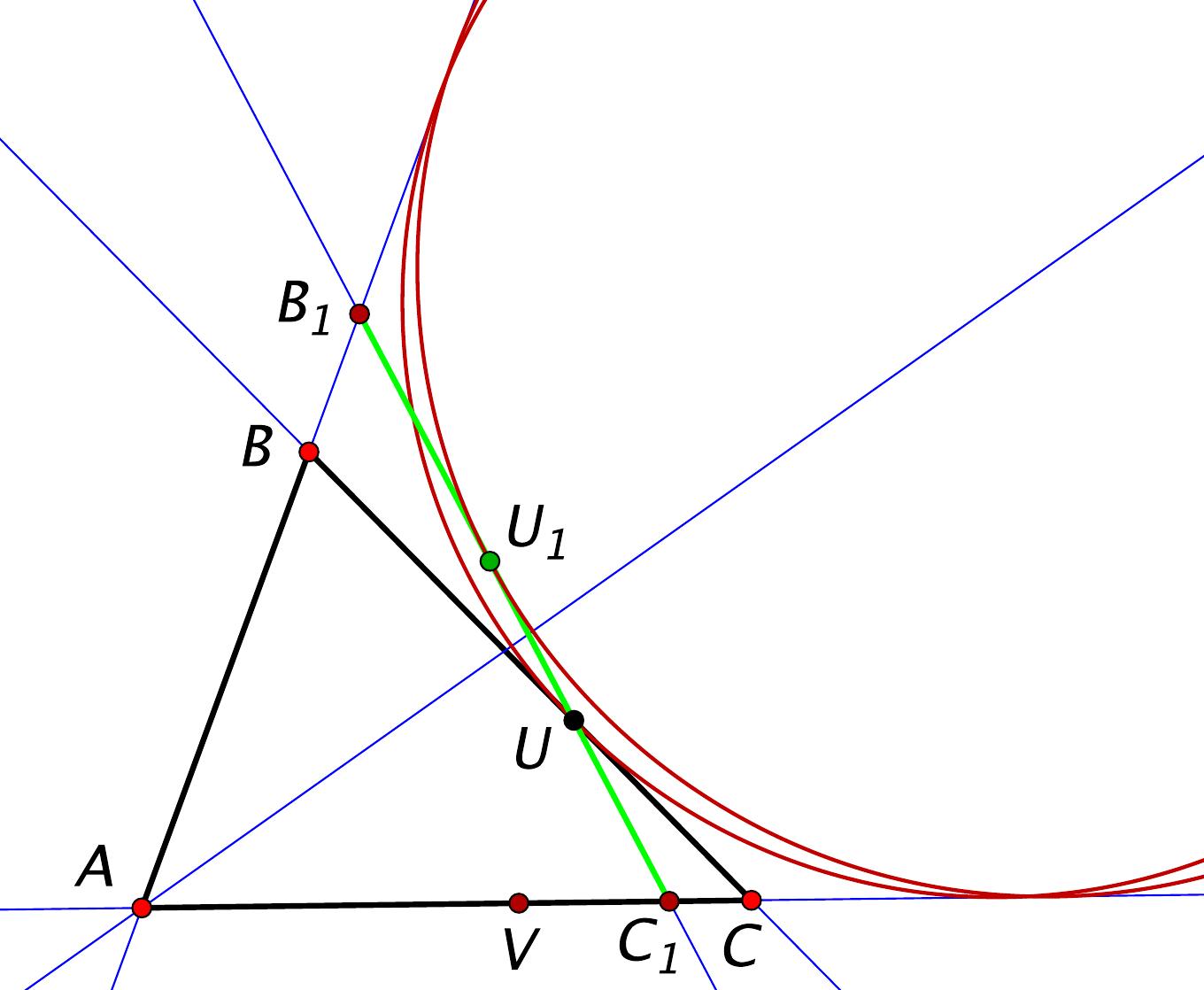}
\caption{Rotation of the line $BC$.}
\label{BC}
\end{figure}

The second case is depicted in Figure \ref{BC}. Recall an elementary geometry problem: given a point $U$ inside the angle with vertex $A$, construct a chord $BC$ through point $U$ that minimizes the perimeter of $\triangle BAC$. 
The solution is to inscribe a circle in the angle so that point $U$ lies between the circle and the vertex $A$, and then to push this circle in until it passes through point $U$. The tangent line $BC$ to the circle at $U$ is the solution. We encountered this argument earlier, in the proof of Lemma \ref{lm:per}.

Consider the infinitesimally deformed  $\triangle B_1A C_1$. The point $U$ is now outside its excircle. 
If the line $BC$ has turned clockwise about point $U$, then the tangency point of the excircle $U_1$  lies between $U$ and $B_1$ (and if the rotation is counterclockwise, the tangency point lies between $U$ and $C_1$). Hence the velocity of point $U$ points toward point $B$, as claimed. 

Finally, to a contradiction. When $\triangle ABC$ is infinitesimally deformed to $\triangle A_1 B_1 C$ (see Figure \ref{def}), the line $AC$ is rotating counterclockwise and the line $BC$ clockwise. The velocities of point $U$ under these two motions add up and, as we saw, the resulting motion of the tangency  point $U$ is toward point $B$. However consider Figure \ref{rot}. The convexity of the curve $\g$ implies that the vector $U U_1$ has the direction toward point $C$, that is, from point $B$. This is the desired contradiction. 
\proofend

\begin{figure}[ht]
\centering
\includegraphics[width=4in]{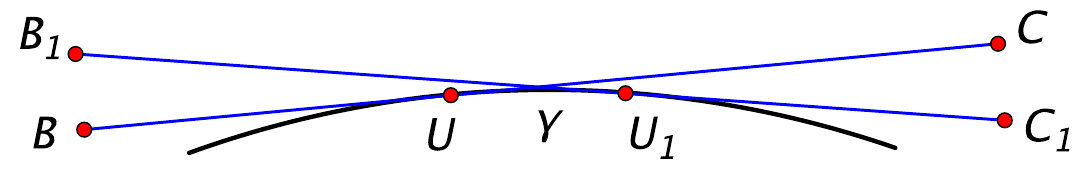}
\caption{The convexity of the curve $\g$ implies that  point $U$ moves in the direction from point $B$.}
\label{rot}
\end{figure}

\noindent {\bf Second proof.}  We specialize the formulas  of Section \ref{sect:dis} in the case $n=3$. Let $\C$ be the space of triangles of unit perimeter, a 5-dimensional space, and $\D$ be the 3-dimensional distribution therein,  generated by the vector fields $\xi_i=\partial \alpha_i + \Phi_i \partial p_i,\ i=1,2,3$, where $\Phi_i$ is the right hand side expression in (\ref{eq:cr}).

One has
$$
[\xi_1,\xi_2]=\left(\frac{\partial \Phi_2}{\partial \alpha_1} + \Phi_1 \frac{\partial \Phi_2}{\partial p_1}  \right) \partial p_2
- \left(\frac{\partial \Phi_1}{\partial \alpha_2} + \Phi_2 \frac{\partial \Phi_1}{\partial p_2}  \right) \partial p_1,
$$
and two other similar formulas obtained by the cyclic permutations of the indices. 
To simplify the notations, set
$$
W_2=\frac{\partial \Phi_2}{\partial \alpha_1} + \Phi_1 \frac{\partial \Phi_2}{\partial p_1},\ U_1= \frac{\partial \Phi_1}{\partial \alpha_2} + \Phi_2 \frac{\partial \Phi_1}{\partial p_2},
$$ 
and four similar formulas obtained by the cyclic permutations of the indices. We shall calculate these quantities. 

We are free to choose the origin (this affects the values of $p_i$), and a convenient choice is the incenter, whence $p_i=r, i=1,2,3$. By scaling, we may assume that $r=1$. We note that the operations of taking partial derivative with respect to $\alpha_i$ and setting $p_j=1$ for all $j$ commute, and this simplifies  calculations.

To further simplify the notations, denote the exterior angles of the triangle by $2u,2v,2w$, so that
$$
\alpha_2-\alpha_1=2u,\ \alpha_3-\alpha_2=2v,\ \alpha_1-\alpha_3=2w.
$$
Note that $u+v+w=\pi$ and that
\begin{equation} \label{eq:part}
\partial \alpha_1 = \frac12 (\partial w - \partial u),\ \partial \alpha_2 = \frac12 (\partial u - \partial v),\
\partial \alpha_2 = \frac12 (\partial v - \partial w). 
\end{equation}

One has
$$
\Phi_2=\frac{\cos^2 u\ (p_{3}+p_2) - \cos^2 v\ (p_{2}+p_1) }
{2\sin (u+v) \cos u \cos v},
$$
hence
$$
\frac{\partial \Phi_2}{\partial p_1} = -\frac{\cos v}{2\sin (u+v) \cos u}, 
\frac{\partial \Phi_2}{\partial p_2} = \frac{\cos^2u -\cos^2 v}{2\sin (u+v) \cos u\cos v},
\frac{\partial \Phi_2}{\partial p_3} = \frac{\cos u}{2\sin (u+v) \cos v},
$$
and, setting  $p_j=1$ for all $j$,
$$
\frac{\partial \Phi_2}{\partial \alpha_1}=\frac{1}{2\cos^2 u}, 
\frac{\partial \Phi_2}{\partial \alpha_2}=-\frac12 \left(\frac{1}{2\cos^2 u}+\frac{1}{2\cos^2 v}\right),
\frac{\partial \Phi_2}{\partial \alpha_3}=\frac{1}{2\cos^2 v}.
$$
As before, other similar formulas are obtained by the cyclic permutations of the indices.

Then, using some trigonometry and remembering that $u+v+w=\pi$, we obtain
\begin{equation} \label{eq:WU}
\begin{aligned}
&W_1= \frac{\tan u}{\sin 2v}, W_2=\frac{\tan v}{\sin 2w}, W_3=\frac{\tan w}{\sin 2u},\\
&U_1=\frac{\tan w}{\sin 2v},   U_2=\frac{\tan u}{\sin 2w}, U_3=\frac{\tan v}{\sin 2u},
\end{aligned}
\end{equation}
all positive quantitives.

We note, in passing, that these formulas imply that $\D$ has the growth type $(3,5)$. Indeed,  $\D^\perp$ is generated by any two of the three differential 1-forms ${r_1}_i = dp_i - \Phi_i d\alpha_i$. We see that 
$$
\theta_1 ([\xi_1,\xi_2]) < 0, \theta_2 ([\xi_1,\xi_2]) > 0, \theta_1 ([\xi_2,\xi_3]) = 0, \theta_2 ([\xi_2,\xi_3]) < 0.
$$
It follows that the fields $[\xi_1,\xi_2]$ and  $[\xi_2,\xi_3]$, together with $\D$, span the tangent spaces of $\C$.

We continue the proof of the 3-periodic Ivrii conjecture for outer  length billiards. 
Assume that there exists an open disc consisting of 3-periodic points. Then one has a horizontal disc  $D\subset \C$ spanned by a linear combination of the vector fields $\xi_i$, say $
a_1 \xi_1+a_2\xi_2+a_3\xi_3\ \ {\rm and}\ \ b_1 \xi_1+b_2\xi_2+b_3\xi_3.
$
Without loss of generality,  assume that $a_1 \ne 0, b_2 \ne 0$ and, taking linear combinations,  replace these fields by $\xi_1 + a \xi_3$ and $\xi_2+b\xi_3$. Then 
$$
[\xi_1 + a \xi_3, \xi_2 + b \xi_3] = f (\xi_1 + a \xi_3) + g (\xi_2 + b \xi_3).
$$
The bracket on the left equals
$$
[\xi_1,\xi_2]-a[\xi_2,\xi_3]-b[\xi_3,\xi_1] + \{\xi_1(b)+a\xi_3(b)-\xi_2(a)-b\xi_3(a)\}\, \xi_3,
$$
and this expression does not contain the terms with $\partial \alpha_1$ and $\partial \alpha_2$. Hence $f=g=0$, and we
conclude that
\begin{equation} \label{eq:both}
[\xi_1,\xi_2]-a[\xi_2,\xi_3]-b[\xi_3,\xi_1] =0,\ \xi_1(b)+a\xi_3(b)-\xi_2(a)-b\xi_3(a) = 0.
\end{equation}
The first equation reads
$$
W_2 \partial p_2 - U_2 \partial p_1 - a(W_3 \partial p_3 - U_2 \partial p_2) - b (W_1 \partial p_1 - U_3 \partial p_3) =0,
$$
and solving it yields
$$
a = - \frac{W_2}{U_2} = - \frac{\tan v}{\tan u},\ b = - \frac{U_1}{W_1} = - \frac{\tan w}{\tan u}.
$$

Recall the equalities (\ref{eq:part}) and that $\xi_i=\partial \alpha_i + \Phi_i \partial p_i$.
We are in a position to compute (twice) the left hand side of the second equality in (\ref{eq:both}):
\begin{equation*}
\begin{aligned}
&-(\partial w - \partial u) \left(\frac{\tan w}{\tan u}\right) + \frac{\tan v}{\tan u} (\partial v - \partial w) \left(\frac{\tan w}{\tan u}\right)\\
&+ (\partial u - \partial v) \left(\frac{\tan v}{\tan u}\right)
 - \frac{\tan w}{\tan u} (\partial v - \partial w) \left(\frac{\tan v}{\tan u}\right) =\\ 
 &-\frac{1}{\cos^2 w \tan u} -\frac{\tan w}{\sin^2 u} -\frac{\tan v}{\cos^2 w \tan^2 u} 
 -\frac{\tan v}{\sin^2 u} -\frac{1}{\cos^2 v \tan u} -\frac{\tan w}{ \cos^2 v\tan^2 u}.
\end{aligned}
\end{equation*} 
Every term of this expression is negative, so it is not equal to zero. This contradiction completes the proof.
\proofend

\begin{figure}[ht]
\centering
\includegraphics[width=2.3in]{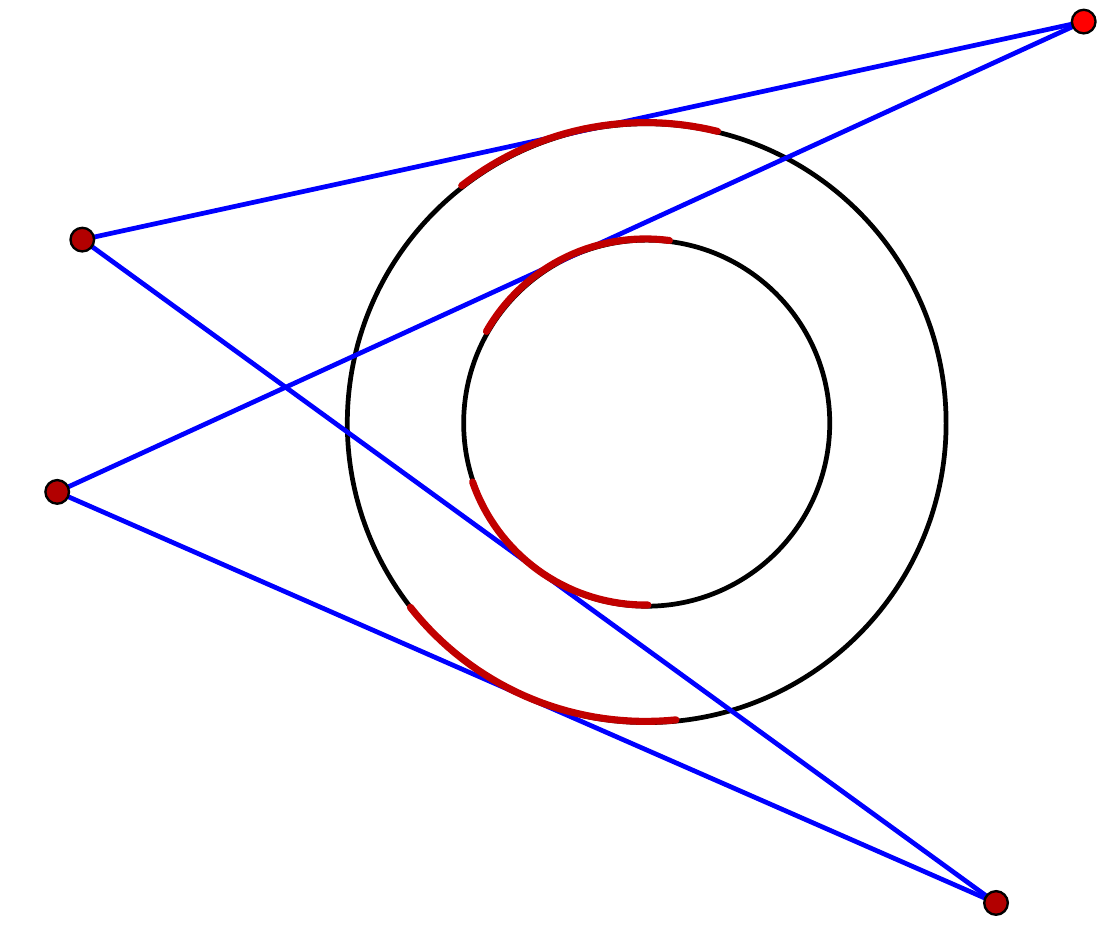}
\caption{The composition of the outer length billiard reflections in the concentric circular arcs is the identity.}
\label{circ}
\end{figure}

\begin{remark} 
{\rm 
1. A completely non-integrable 3-dimensional distribution in 5-dimensional manifold may admit a horizontal surface. 
Consider 5-dimensional space with coordinates $x,y,z,u,v$ and a 3-dimensional distribution $\D$ spanned by the vector fields $\partial x, \partial y, \partial z + x \partial u + y \partial v$. Then $\D$ is non-integrable with  the bracket growth type $(3,5)$, but $\D$ admits a horizontal 2-dimensional disc spanned by the commuting vector fields  $\partial x$ and  $\partial y$.

2. One can construct four germs of properly oriented convex curves so that the composition of the respective four outer length billiard reflections  is the identity map. It suffices to take a pair of arcs on each of two concentric circles. The respective outer length billiard maps are rotations about the common center of the circles, and such rotations commute, see 
Figure \ref{circ}. A similar consideration applies to other billiard models, including Birkhoff billiards (where one could also use confocal ellipses instead of concentric circles).
}
\end{remark}

\end{document}